 \newtheorem{thm}{Theorem}[section]
 \newtheorem{cor}[thm]{Corollary}
 \newtheorem{lem}[thm]{Lemma}
 \newtheorem{prop}[thm]{Proposition}
 \newtheorem{ex}[thm]{Example}
 \newtheorem{defn}[thm]{Definition}
 \newcommand{\reals}{\mathbb{R}}
 \newenvironment{proof}{{\bf Proof.}\rm  }{\hfill $\Box$}{\vskip5mm}
\def\ds{\mathcal{DS}\big(\ell^p(I)\big)}
\def\ds{\mathcal{DS}\big(\ell^p(I)\big)}
\def\ds{\mathcal{DS}\big(\ell^p(I)\big)}
\def\dslpn{\mathcal{DS}\big(\ell^p(\mathbb N)\big)}
\def\lpi{\ell^p(I)}
\def\lpj{\ell^p(J)}
\def\lpn{\ell^p(\mathbb N)}
\def\ifn{I_f^n}
\def\ifp{I_f^+}
\def\ifm{I_f^-}
\def\igp{I_g^+}
\def\ifk{I_f^k}
\def\ign{I_g^n}
\def\a{\alpha}
\def\b{\beta}
\def\t{\theta}
\def\s{\sigma}
\def\ds{\mathcal{DS}\big(\ell^p(I)\big)}
\def\Plpi{\mathcal P\big(\lpi\big)}
\def\S{\v\sum}
\def\dij{d_{ij}}
\def\tij{\langle Te_j,e_i\rangle}
\def\Mpr{\mathcal M_{\!_{Pr}}\!\big(\lpi\big)}
\def\v{\displaystyle}
\def\r{\rightarrow}
\begin{document}

\title{Linear Preservers of Majorization on $\lpi$
}
\author{ F. Bahrami,\ \  A. Bayati Eshkaftaki,\ \  S.M. Manjegani \thanks{This work is partially supported by a grant from Isfahan
University of Technology. }}
\maketitle
\vspace{-.5cm}
\begin{center}
{\small Department of Mathematical Sciences, Isfahan University of
Technology\\ Isfahan 84156, IRAN\footnote{ email addresses. fbahrami@cc.iut.ac.ir\ \ a.bayati@math.iut.ac.ir\ \ manjgani@cc.iut.ac.ir} }
\end{center}

\hrule
\begin{abstract}
In this paper, using doubly stochastic operators, we extend the notion of majorization to the space $\ell^p(I)$, where $I$ is assumed to be an infinite set, and then,  in the case $p\in (1,\infty)$,
characterize the structure of all bounded linear maps on this space which preserve majorization.
\\
\\
 MSC (2010): 15A86, 47B60\\
 \\
Keywords: Majorization, linear preserver, permutation, doubly stochastic operator.
\\
\end{abstract}
\hrule
\section{Introduction}

Majorization in finite dimension has been widely studied  as a result of its applications to many areas of mathematics, such as matrix analysis, operator theory, frame theory, and inequalities involving convex functions, as well as other sciences like physics and economics. See, for example, the papers \cite{AK2005}, \cite{CRP2006}, \cite{NV2001} and \cite{P2012}. We also  refer the reader to the standard text by Marshall and Olkin \cite{MO1979}. For a pair of vectors $x$ and $y$ in $\mathbb R^n$, $x$ is called majorized by $y$, denoted by $x\prec y$, if
\[ \sum_{i=1}^k x_i^{\downarrow} \leq \sum_{i=1}^k y_i^{\downarrow} \qquad (k=1,2,\ldots,n)\]
and
\[ \sum_{i=1}^n x_i^{\downarrow}= \sum_{i=1}^n y_i^{\downarrow}\]
where $x_1^{\downarrow}\geq x_2^{\downarrow}\geq\cdots\geq x_n^{\downarrow}$ is the decreasing rearrangement of  components of a vector $x$.

There are some equivalent conditions for vector majorization. For example, Hardy, Littlewood and Polya \cite{HLP1932} proved   that $x\prec y$ if and only if $x=Dy$ for some doubly stochastic matrix $D$.  We recall that a square matrix with non-negative real entries is called doubly stochastic if each of its row sums and column sums equal 1. As we will see in Section 3, this equivalent condition will serve as our motivation to define majorization on certain spaces other than $\mathbb R^n$.

In more recent years the extension of majorization theory to infinite sequences has turned up and obtained some applications (see for example \cite{KW2010}). In this paper, we will consider majorization on the space $\lpi$, for $1\leq p<+\infty$, and in the case where $I$ is an infinite set. Our main interest is in linear maps which preserve majorization. The following result, due to Ando, characterizes these maps in finite dimension.
\begin{thm}\label{thm1.1} {\rm \cite{A1989}}. Let $T:\mathbb R^n\rightarrow\mathbb R^n$  be a linear map. Then $T(x)\prec T(y)$ whenever $x\prec y$ (i.e. T preserves majorization) if and only if one of the following conditions hold.\\
{\rm (i)} $T(x)={\rm tr}(x) a$, for some $a\in \mathbb R^n$.\\
{\rm (ii)} $T(x)=\beta P(x)+\gamma {\rm tr}(x)e$ for some $\beta,\gamma\in \mathbb R$ and permutation $P:\mathbb R^n\rightarrow \mathbb R^n$.
\end{thm}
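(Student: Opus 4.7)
The plan is to prove the two directions separately. The easy direction is that both forms preserve majorization: for (i), $x\prec y$ implies $\mathrm{tr}(x)=\mathrm{tr}(y)$ and hence $T(x)=T(y)$; for (ii), permutation matrices preserve majorization (a permutation of $v$ shares its decreasing rearrangement), multiplication by any real scalar $\beta$ preserves it as well, and adding the common constant vector $\gamma\,\mathrm{tr}(x)e=\gamma\,\mathrm{tr}(y)e$ does not affect majorization since it shifts both decreasing rearrangements by the same constant.

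For the converse, set $a:=T(e_1)$. Because each $e_i$ is a permutation of $e_1$ and permutations mutually majorize, $T(e_i)$ is a permutation of $a$ for every $i$; write $T(e_i)=P_ia$. Applying the same reasoning to the vector $e_i-e_j$ (a permutation of $e_k-e_\ell$ for every $i\neq j$, $k\neq\ell$) shows that $T(e_i)-T(e_j)$ is a permutation of $T(e_k)-T(e_\ell)$. Since the only permutation of the zero vector is zero, if $T(e_i)=T(e_j)$ for some $i\neq j$ then $T(e_k)=T(e_\ell)$ for all pairs, every $T(e_k)$ coincides with $a$, and $T(x)=\sum_k x_k a=\mathrm{tr}(x)\,a$, placing us in case~(i).

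Assume henceforth that the $T(e_i)$ are pairwise distinct; the goal is to reach form (ii). The heart of the argument is to show that $a$ has $n-1$ equal coordinates, i.e.\ $a=\beta e_{k_0}+\gamma e$ for some index $k_0$ and scalars $\beta\neq 0$, $\gamma\in\mathbb R$. I would extend the permutation-of-each-other observation to arbitrary subset sums: for each $k$ and each $S\subseteq\{1,\ldots,n\}$ of size $k$, the indicator $\sum_{i\in S}e_i$ is a permutation of $\sum_{i\in S'}e_i$, so the images $\sum_{i\in S}T(e_i)=\sum_{i\in S}P_ia$ are mutual permutations. Specializing to $k=2$, the multiset of entries of $P_ia+P_ja$ is the same for every pair $i\neq j$. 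A combinatorial analysis of which families of $n$ distinct permutations of a vector $a$ can admit this ``identical pairwise-sum shape'' property will force the multiset of values of $a$ to consist of one copy of some $\beta+\gamma$ and $n-1$ copies of $\gamma$; any multiplicity pattern with at least two values each of multiplicity $\geq 2$, or with three or more distinct values, should be excluded by bounding how many $2$-subsets of coordinate positions can share the same coincidence profile. This combinatorial step is the main obstacle and is where I expect the technical work to concentrate.

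With $a=\beta e_{k_0}+\gamma e$ in hand, each $T(e_j)$, being a permutation of $a$, takes the form $\beta e_{\sigma(j)}+\gamma e$ for a uniquely determined index $\sigma(j)$; pairwise distinctness of the $T(e_j)$ makes $\sigma$ a permutation of $\{1,\ldots,n\}$. Linearity then yields
\[T(x)=\sum_{j=1}^n x_j T(e_j)=\beta\sum_{j=1}^n x_j e_{\sigma(j)}+\gamma\Big(\sum_{j=1}^n x_j\Big)e=\beta P(x)+\gamma\,\mathrm{tr}(x)\,e,\]
where $P$ is the permutation matrix of $\sigma$, giving case (ii).
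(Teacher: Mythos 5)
The paper offers no proof of this theorem; it is quoted from Ando's survey \cite{A1989}, so there is no internal argument to compare yours against. Your ``if'' direction is correct, and so is your reduction: each $T(e_i)$ must be a permutation of $a=T(e_1)$, and either two columns coincide (which, via the images of the $e_i-e_j$, collapses everything to case (i)) or the columns are pairwise distinct. That opening is sound.

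The gap is the step you yourself flag, and it is not merely deferred work: the route you propose provably cannot close it. The conditions you extract --- that $\sum_{i\in S}T(e_i)$ and $\sum_{i\in S'}T(e_i)$ be permutations of one another whenever $|S|=|S'|$ --- are strictly weaker than majorization preservation and are satisfied by maps not of the required form. Concretely, in $\mathbb R^3$ take the circulant $T$ with columns $(2,1,0)$, $(0,2,1)$, $(1,0,2)$: these are three distinct permutations of $a=(2,1,0)$, every pairwise sum is a permutation of $(1,2,3)$, and the $k=1$ and $k=3$ conditions hold trivially, yet the multiset of entries of $a$ has three distinct values, so your claimed combinatorial conclusion is false for this family. (And $T$ is indeed not a preserver: $(0,1,-1)$ and $(1,0,-1)$ are permutations of each other, but $T(0,1,-1)=(-1,2,-1)$ is not a permutation of $T(1,0,-1)=(1,1,-2)$.) Hence no analysis of the images of indicator vectors alone can force $a=\beta e_{k_0}+\gamma e$; you must test $T$ on genuinely two-parameter inputs such as $\alpha e_{j_1}+\beta e_{j_2}$ with $\alpha,\beta$ ranging over $\mathbb R$ --- this is exactly the device the present paper uses in its infinite-dimensional analogue (Lemmas \ref{alpha} and \ref{atmost}), and some version of it is unavoidable here. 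Until that is supplied, the converse direction is unproved.
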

Here ${\rm tr}(x)=\sum_{i=1}^n x_i$ is the trace of the vector $x\in \mathbb R^n$. Also $e\in \mathbb R^n$  denotes the vector $(1,1,\ldots,1)$.

 Quit different from this result, our main theorem asserts that if $I$ is an infinite set and $1<p<+\infty$, then a linear map $T:\lpi\rightarrow \lpi$ preserves majorization if and only if  the columns of $T$ are permutations of each other and in each row of $T$ there is at most one non-zero element. Note that, in condition (ii) of Theorem \ref{thm1.1}, if $\gamma=0$ then the resulted $T$ has the structure mentioned above.

 The organization of the paper is as follows. In the next section we recall the definition of doubly stochastic operators on the space $\lpi$, for $1\leq p<+\infty$.  We obtain some  properties  and give a way of constructing these operators. In section 3, we give a definition of majorization on $\lpi$ based on doubly stochastic operators. The main theorem of this section asserts that if  $f\prec g$ and $g\prec f$, for $f,g\in \lpi$, then there exists a   permutation $P:\lpi\rightarrow \lpi$ such that $f=Pg$, a result which is well-known if $I$ is a finite set. Finally, in the last section we characterize the linear preservers of majorization on $\lpi$, for an infinite set $I$ and in the case where $1<p<+\infty$. We end this section with an example which shows that this characterization is not true for $p=1$.

\section{Doubly Stochastic Operators}
We first recall some definitions. For a non-empty set $I$ and a real $p\in [1,+\infty)$, let
$\ell^p(I)$ be the Banach space of all functions $f:I\rightarrow
\reals$ with
 \[\|f\|_p:=\big(\sum_{i\in
I}|f(i)|^p\big)^{\frac{1}{p}}<+\infty\]
   An element $f\in
\ell^p(I)$ can be represented  as $\sum_{i\in I} f(i)\, e_i$,
where $e_i:I\rightarrow \reals$ is defined by
$e_i(j)=\delta_{ij}$, the Kroneker delta. Considering $e_i$ as an
element of the dual space  of $\ell^p(I)$, we have
\[ \forall i\in I\qquad f(i)=\langle f,e_i\rangle\]
where $\langle\cdot,\cdot\rangle$ stands for the dual pairing.
Hence, for $f\in \ell^p(I)$ we will have the representation
\[ f=\sum_{i\in I} \langle f,e_i\rangle e_i\]

 It is a well-known fact that $\ell^p(I)$ is  an ordered vector space
 (and, in fact, a Banach lattice)
  under the natural partial ordering
  on the set of real valued
  functions defined on $I$.
We recall that a linear operator $A$ on an ordered vector space  $X$ is
called positive if $Ax\geq 0$ whenever $x\geq 0$.
\begin{defn}\label{defn1.1}
Let $I$ and $J$ be two non-empty sets, and suppose  $A:\lpj\r\lpi$
is a bounded linear operator. Then $A$ is called
 \begin{itemize}
 \item[{\rm(i)}]row stochastic
(respectively, column stochastic) if $A$ is positive and
\begin{equation}\label{defn1.1eq1}
 \forall i\in I,\quad\sum_{j\in J} \langle Ae_j,e_i\rangle =1\quad
 \Big(\forall j\in J,\quad \sum_{i\in I}\langle Ae_j,e_i\rangle =1\Big)
  \end{equation}
 \item[{\rm(ii)}] doubly stochastic
if $A$ is both row and column stochastic.
 \item[{\rm(iii)}] a permutation if there exists a bijection $\t: J\r I$ for which $Ae_j=e_{\t
(j)}$, for each $j\in J$.
 \end{itemize}
\end{defn}

As the following theorem shows if there exists  a doubly stochastic operator between the spaces $\lpi$ and $\lpj$ then $I$ and $J$ have the same cardinality. This result plays a crucial role in the proof of the main theorem of Section 3.
\begin{thm}\label{|I|=|J|}
Let $I,J$ be two arbitrary non-empty sets. Then there exists a
doubly stochastic operator $D:\lpj\r\lpi$ if and only if
$|J|=|I|$, where $|I|$ denotes the cardinal number of a set $I$.
\end{thm}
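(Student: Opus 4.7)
The plan is to handle the two directions separately. For the easier implication $|I|=|J|\Rightarrow$ existence, I would fix any bijection $\theta:J\to I$ and observe that the permutation $D:\lpj\r\lpi$ prescribed by $De_j:=e_{\theta(j)}$ (Definition \ref{defn1.1}(iii)) extends to an isometric isomorphism of the two $\ell^p$-spaces. Its matrix $d_{ij}=\delta_{i,\theta(j)}$ has exactly one nonzero entry, equal to $1$, in each row and each column, so $D$ is positive and both conditions in \eqref{defn1.1eq1} are immediate.

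For the nontrivial converse, let $D:\lpj\r\lpi$ be doubly stochastic and set $d_{ij}:=\langle De_j,e_i\rangle\ge 0$. The pivotal observation is that, for each fixed $i\in I$, the convergence $\sum_{j\in J}d_{ij}=1$ in $\mathbb{R}$ forces the support $J_i:=\{j\in J:d_{ij}>0\}$ to be at most countable; symmetrically, $I_j:=\{i\in I:d_{ij}>0\}$ is at most countable for every $j\in J$. The column-stochastic condition ensures that every $j\in J$ belongs to some $J_i$, giving the covering $J=\bigcup_{i\in I}J_i$ and hence the cardinal bound $|J|\le|I|\cdot\aleph_0$; by symmetry, $|I|\le|J|\cdot\aleph_0$.

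When $I$ and $J$ are both infinite we have $|I|\cdot\aleph_0=|I|$ and $|J|\cdot\aleph_0=|J|$, so the two inequalities above collapse to $|J|\le|I|\le|J|$. To rule out the possibility that one set is finite and the other infinite, I would invoke Tonelli for the non-negative array $(d_{ij})$ on $I\times J$: summing in the two orders yields $\sum_{i\in I}1=\sum_{i\in I}\sum_{j\in J}d_{ij}=\sum_{j\in J}\sum_{i\in I}d_{ij}=\sum_{j\in J}1$, so $I$ is finite exactly when $J$ is, and in the finite case the common value is the cardinality of each.

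The only conceptually nontrivial step is the countability of the supports $J_i$ and $I_j$, which is an immediate consequence of the fact that a convergent series of non-negative reals has at most countably many nonzero terms. Everything else — the permutation construction, verification of the stochastic conditions, the Tonelli computation, and the final cardinal arithmetic — is routine.
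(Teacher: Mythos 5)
Your proposal is correct and follows essentially the same route as the paper: the trivial direction via a permutation operator, the finite/infinite dichotomy via interchanging the double sum of the non-negative array $(d_{ij})$, and the infinite case via countability of the row and column supports together with the cardinal identities $|I|\cdot\aleph_0=|I|$ and $|J|\cdot\aleph_0=|J|$. The only cosmetic difference is that the paper routes both inequalities through the support set $C=\{(i,j):d_{ij}>0\}$ and the disjointness of its fibers, whereas you use the coverings $J=\bigcup_{i\in I}J_i$ and $I=\bigcup_{j\in J}I_j$ directly; the cardinal arithmetic is identical.
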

\begin{proof}
First, suppose there exists a doubly stochastic operator
$D:\lpj\r\lpi$. Using the relation
\begin{equation*}
\S_{j\in J}1=\sum_{j\in J}\sum_{i\in I}\langle De_j,e_i\rangle =
\sum_{i\in I}\sum_{j\in J}\langle De_j,e_i\rangle  = \sum_{i\in I}1,
\end{equation*}
$J$ is finite if and only if $I$ is finite, and in this case $|I|=|J|$.

Now suppose $J$ is infinite. Let
$$C=\{(i,j)\in I\times J\ ;\
\langle De_j,e_i\rangle>0\}.$$
Then $C=\bigcup_{i\in I}(\{i\}\times C_i)=\bigcup_{j\in
I}(C^j\times \{j\})$, where $C_i= \{j\in J\ ; \ \langle De_j,e_i\rangle>0\}$
 and $C^j= \{i\in I\ ;\  \langle De_j,e_i\rangle>0\}$. Note that since $D$ is doubly
stochastic, each $C_i$ and $C^j$ is non-empty and at most
countable. Moreover, $C_i\cap C_{i\prime}=\emptyset$ and $C^j\cap
C^{j\prime}=\emptyset$ for distinct $i, i^{\prime}\in I$ and
distinct $j,j^{\prime}\in J$. Hence
$$|I|\leq |C|\leq \aleph_0 \times |I|~~, ~~|J|\leq |C|\leq \aleph_0 \times |J|$$
where $\aleph_0$ is the cardinal number of $\mathbb N$. Since
$|I|,|J|\geq \aleph_0$, we have also $\aleph_0 \times |I|=|I|$ and
$\aleph_0 \times |J|=|J|$. Therefore $|I|=|C|=|J|$.\\

 Conversely,
let $\t:J\r I$ be a bijection. If $D:\lpj\r \lpi$ is defined for each $f=\S_{j\in J}f(j) e_j \in \lpj$ by
$D f=\S_{j\in J} f(j) e_{_{\t(j)}}$, then it is easily verified that $D$  is doubly
stochastic.
\end{proof}
\\

Since in this paper we are going to work with doubly stochastic operators, according to the previous theorem, we may assume that $I=J$.
The set of all row stochastic,
 column stochastic,  doubly stochastic operators and permutation maps on $\ell^p(I)$  are denoted, respectively, by
$\mathcal{RS}\big(\ell^p(I)\big), ~\mathcal{CS}\big(\ell^p(I)\big)$, ~$\ds$ and
$\mathcal P\big(\lpi\big)$. It is easily seen that $\mathcal P\big(\lpi\big)\subset \ds$. To obtain an essential property of these sets of operators, we need the following lemma. \\

\begin{lem}\label{lem2.3}
Let $p\in [1,+\infty)$ and $A:\ell^p(I)\rightarrow \ell^p(I)$
be a positive bounded linear operator. Then
\begin{enumerate}
\item[{\bf(i)}] $A$ is row stochastic if and only if
\begin{equation}\label{lem1.2eq1}
\forall f\in \ell^1(I),\qquad\sum_{j\in I} \langle Ae_j,f\rangle =\sum_{i\in I} f(i)
\end{equation}
\item[{\bf(ii)}] $A$ is column stochastic if and only if
\begin{equation}\label{lem1.2eq2}
\forall f\in \ell^1(I),\qquad \sum_{i\in I}\langle Af,e_i\rangle=\sum_{i\in I} f(i)
\end{equation}
\end{enumerate}
\end{lem}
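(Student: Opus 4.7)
My plan is to reduce both equivalences to a single formula involving the nonnegative matrix entries $a_{ij}:=\langle Ae_j,e_i\rangle$ of the positive operator $A$, apply Tonelli's theorem to swap orders of summation (legal because the summands are nonnegative), and handle signs by splitting a general $f\in\ell^1(I)$ as $f=f^+-f^-$ with $f^\pm\geq 0$ in $\ell^1(I)$. In both parts, the converse direction will be immediate by plugging a single basis vector into the assumed identity.

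\textbf{Part (i).} Since $\ell^1(I)\subseteq\ell^q(I)$ for $q$ the conjugate exponent of $p$ (on counting measure $\|f\|_q\leq\|f\|_1$), every $f\in\ell^1(I)$ acts as a continuous linear functional on $\ell^p(I)$ via $\langle g,f\rangle=\sum_{i\in I} g(i)f(i)$. For $f\geq 0$,
\[
\sum_{j\in I}\langle Ae_j,f\rangle=\sum_{j\in I}\sum_{i\in I} a_{ij}\,f(i)=\sum_{i\in I} f(i)\sum_{j\in I} a_{ij}=\sum_{i\in I} f(i),
\]
where Tonelli justifies the swap and row stochasticity gives the last equality. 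Writing $f=f^+-f^-$ extends the identity to arbitrary $f\in\ell^1(I)$. Conversely, substituting $f=e_i$ into (\ref{lem1.2eq1}) yields $\sum_{j\in I} a_{ij}=1$ for every $i$, which is row stochasticity.

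\textbf{Part (ii).} The embedding $\ell^1(I)\subseteq\ell^p(I)$ (again $\|f\|_p\leq\|f\|_1$) makes $f=\sum_{j\in I} f(j)\,e_j$ convergent in $\ell^p(I)$, and continuity of $A$ yields $Af=\sum_{j\in I} f(j)\,Ae_j$, so that $\langle Af,e_i\rangle=\sum_{j\in I} f(j)\,a_{ij}$. For $f\geq 0$,
\[
\sum_{i\in I}\langle Af,e_i\rangle=\sum_{i\in I}\sum_{j\in I} f(j)\,a_{ij}=\sum_{j\in I} f(j)\sum_{i\in I} a_{ij}=\sum_{j\in I} f(j)
\]
by Tonelli and column stochasticity. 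Positivity of $A$ gives $|\langle Af,e_i\rangle|\leq\langle A|f|,e_i\rangle$, so the sum $\sum_i\langle Af,e_i\rangle$ converges absolutely for any $f\in\ell^1(I)$ and linearity extends the identity. The converse follows by taking $f=e_j$.

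\textbf{Main obstacle.} Nothing is deep; the one point worth singling out is the dual role of $\ell^1(I)$, which in (i) is used as a subspace of $(\ell^p(I))^\ast=\ell^q(I)$ and in (ii) as a subspace of $\ell^p(I)$ itself. Both embeddings hold on counting measure for every $p\in[1,+\infty)$, and once this is in place, positivity of $A$ makes all summands nonnegative so that Tonelli cleanly justifies every interchange of sums.
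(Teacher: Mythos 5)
Your proof is correct. Part (i) follows essentially the same path as the paper: both embed $\ell^1(I)$ into $\ell^q(I)=(\ell^p(I))^*$ and justify the interchange of the double sum over the nonnegative entries $a_{ij}$; the paper invokes Fubini after checking $\sum_i\sum_j |f(i)|a_{ij}=\sum_i|f(i)|<\infty$ directly, while you use Tonelli on $f\ge 0$ and then split $f=f^+-f^-$, which is the same computation in a different order. The real divergence is in part (ii): the paper passes to the adjoint $A^*:\ell^q(I)\to\ell^q(I)$, observes that it is row stochastic, and reduces to part (i) via $\langle e_i,Af\rangle=\langle A^*e_i,f\rangle$, whereas you argue directly, using the embedding $\ell^1(I)\subseteq\ell^p(I)$ and continuity of $A$ to write $\langle Af,e_i\rangle=\sum_j f(j)a_{ij}$ and then interchanging sums. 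Your route avoids having to verify that the adjoint of a positive column-stochastic operator is a positive row-stochastic operator (a step the paper dismisses as "easily seen" but which does require a word about positivity of $A^*$ and the identification of $(\ell^p)^*$ with $\ell^q$); the paper's route buys a shorter proof that recycles part (i) verbatim. One small point you handle well and should keep explicit: the absolute convergence of $\sum_i\langle Af,e_i\rangle$ for signed $f$, which you correctly obtain from $|\langle Af,e_i\rangle|\le\langle A|f|,e_i\rangle$ using positivity of $A$. Both arguments are sound.
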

\begin{proof}
 (i) Let $A:\ell^p(I)\rightarrow \ell^p(I)$ be row stochastic. Suppose $q\in (1,+\infty]$
is the exponent conjugate of $p$. Using the inclusion $\ell^1(I)\subset \ell^q(I)$, if $f\in \ell^1(I)$ then the map
$\langle\cdot,f\rangle:\ell^p(I)\rightarrow \reals$ is a bounded
linear functional. Moreover, if $ f=\sum_{i\in I}f(i) e_i$ then
 $\langle\cdot,f\rangle=\sum_{i\in I}
f(i)\langle\cdot, e_i\rangle$. To prove this last equality, it
suffices to consider $\ell^p(I)$ as a subset of
$\ell^{\infty}(I)=\big(\ell^1(I)\big)^*$.\\[2mm]

 Since
\[ \sum_{i\in I}\sum_{j\in I}|f(i)|\langle Ae_j
,e_i\rangle=\sum_{i\in I}|f(i)|<+\infty,\]
 by  Fubini's Theorem, we have
 \[\sum_{j\in I}\langle Ae_j,f\rangle=
 \sum_{j\in I}\sum_{i\in I}f(i)\langle Ae_j ,e_i\rangle=
 \sum_{i\in I}\sum_{j\in I}f(i)\langle Ae_j ,e_i\rangle=\sum_{i\in
 I}f(i)\]
 The converse is clear.\\
 (ii) Suppose $A$ is column stochastic. Let $A^*:\ell^q(I)\rightarrow \ell^q(I)$ be the adjoint
  map. It is easily seen that $A^*$ is row stochastic. Hence, by part (i),
 \[\forall f\in \ell^1(I),\qquad \sum_{i\in I}\langle e_i,Af\rangle=\sum_{i\in I}
 \langle A^*e_i,f\rangle=\sum_{i\in I} f(i)\]
 \end{proof}
 \begin{thm}\label{thm2.4}
 If $A$ and $B$ belong to $\mathcal{RS}\big(\ell^p(I)\big)$ then so does
 $AB$, i.e. the set $\mathcal{RS}\big(\ell^p(I)\big)$ is closed under
 combination. The same conclusion holds  for  sets
 $\mathcal{CS}\big(\ell^p(I)\big)$ and $\ds$.
 \end{thm}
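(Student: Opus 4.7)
My plan is to prove each of the three closure statements separately, leveraging the identification of the operator matrix entries $\langle Ae_j,e_i\rangle$ as the ``matrix coefficients'' of $A$ and exploiting positivity to make Fubini-type interchanges routine.

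First, positivity of the composition $AB$ is immediate: a composition of positive operators sends non-negative vectors to non-negative vectors. So in each of the three cases the only remaining task is to verify the appropriate row/column sum condition.

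For the row stochastic case, I would fix $i \in I$ and compute $\sum_{j\in I}\langle ABe_j,e_i\rangle$ as follows. Since $B$ is positive and $Be_j \in \ell^p(I)$, we have the norm-convergent expansion $Be_j=\sum_{k\in I}\langle Be_j,e_k\rangle e_k$, and continuity of $A$ gives $ABe_j=\sum_{k\in I}\langle Be_j,e_k\rangle Ae_k$. Pairing with $e_i$ yields $\langle ABe_j,e_i\rangle=\sum_{k}\langle Be_j,e_k\rangle\langle Ae_k,e_i\rangle$. Summing over $j$ and invoking Fubini (all terms are non-negative by positivity of $A$ and $B$), this reorganizes to $\sum_{k}\langle Ae_k,e_i\rangle\bigl(\sum_{j}\langle Be_j,e_k\rangle\bigr)$. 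The inner sum equals $1$ because $B$ is row stochastic, and the outer sum equals $1$ because $A$ is row stochastic, giving the desired identity.

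For the column stochastic case, the argument is actually shorter via Lemma~\ref{lem2.3}(ii). For a fixed $j$, the vector $Be_j$ is non-negative with $\sum_{i}(Be_j)(i)=\sum_{i}\langle Be_j,e_i\rangle=1$ since $B$ is column stochastic, hence $Be_j\in \ell^1(I)$. Applying Lemma~\ref{lem2.3}(ii) to the column stochastic operator $A$ with $f=Be_j$ gives $\sum_{i}\langle A(Be_j),e_i\rangle=\sum_{i}(Be_j)(i)=1$, which is exactly the column stochastic condition for $AB$. The doubly stochastic case then follows by combining the two.

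The only technical point to be careful about is the justification of pushing $A$ through the infinite expansion of $Be_j$ and of swapping the two unordered sums over $I$; both are handled by continuity of $A$ for the first and by non-negativity (Tonelli/Fubini) for the second, so no real obstacle arises. If I instead wanted to unify the two cases, I could run the expansion argument for both, but using Lemma~\ref{lem2.3}(ii) directly for the column case is cleaner.
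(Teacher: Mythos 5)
Your proof is correct and follows essentially the same route as the paper: the key step in both is the Tonelli/Fubini interchange of the double sum of non-negative matrix entries, which the paper packages via the adjoint $A^*$ together with Lemma \ref{lem2.3}(i), while you carry it out directly for the row case and invoke Lemma \ref{lem2.3}(ii) for the column case. Your explicit treatment of the column stochastic case and of positivity of $AB$ is a small completeness bonus, since the paper only writes out the row stochastic computation.
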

 Proof. Let $A,B\in \mathcal{RS}\big(\ell^p(I)\big)$ and suppose  $A^*$ is the adjoint of $A$. Then, using Lemma \ref{lem2.3}, for $i\in I$ we have
 \begin{eqnarray*}
 \sum_{j\in I}\langle ABe_j,e_i\rangle &=&\sum_{j\in I}\langle
 Be_j,A^*e_i\rangle\\
  &=&\sum_{r\in I}\langle A^*e_i,e_r\rangle\\
   &=&\sum_{r\in I} \langle e_i,Ae_r\rangle=1
   \end{eqnarray*}
 i.e. $AB\in \mathcal{RS}\big(\ell^p(I)\big)$.\hfill$\Box$

\begin{lem}\label{lem norm ds}
If $D\in \ds$, then $\|D\|\leq 1$.
 \end{lem}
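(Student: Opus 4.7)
The plan is to show $\|Df\|_p \le \|f\|_p$ for every $f\in \lpi$ by reducing to the non-negative case and then applying a pointwise Jensen-type estimate combined with a Fubini argument backed by Lemma \ref{lem2.3}.

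First, I would reduce to $f \geq 0$. Writing $f = f^+ - f^-$ with $f^\pm \geq 0$, positivity of $D$ gives $-D|f| \leq Df \leq D|f|$, hence $|Df| \leq D|f|$ pointwise on $I$. Therefore $\|Df\|_p \leq \|D|f|\,\|_p$, so it suffices to prove the inequality for $f \geq 0$. Set $d_{ij} = \langle De_j, e_i\rangle$, so $d_{ij} \geq 0$, $\sum_j d_{ij} = 1$ (row stochasticity) and $\sum_i d_{ij} = 1$ (column stochasticity). For such $f$,
\[
(Df)(i) = \Bigl\langle \sum_{j\in I} f(j) e_j,\, D^* e_i\Bigr\rangle = \sum_{j\in I} d_{ij} f(j),
\]
the series converging absolutely since $f\geq 0$ and the coefficients are summable in $j$.

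Next, since $t \mapsto t^p$ is convex on $[0,\infty)$ for $p \geq 1$ and the weights $(d_{ij})_{j\in I}$ form a probability distribution on $I$, Jensen's inequality applied pointwise in $i$ yields
\[
\bigl((Df)(i)\bigr)^p = \Bigl(\sum_{j\in I} d_{ij} f(j)\Bigr)^p \leq \sum_{j\in I} d_{ij}\, f(j)^p.
\]
(For $p=1$ this is simply equality.) Summing over $i$ and interchanging the two sums by Tonelli (all terms are non-negative),
\[
\|Df\|_p^p = \sum_{i\in I} \bigl((Df)(i)\bigr)^p \leq \sum_{j\in I} f(j)^p \sum_{i\in I} d_{ij} = \sum_{j\in I} f(j)^p = \|f\|_p^p,
\]
where the penultimate step uses column stochasticity (equivalently Lemma \ref{lem2.3}(ii) applied to $|f|^p \in \ell^1(I)$, since $f \in \lpi$ and $f \geq 0$ implies $f^p \in \ell^1(I)$).

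I do not expect any serious obstacle: the positivity reduction is routine, Jensen's inequality handles the nonlinearity introduced by $p > 1$, and the Fubini/Tonelli interchange is legitimate because all terms involved are non-negative. The one small point to double-check is that the convergence of $(Df)(i) = \sum_j d_{ij} f(j)$ as a genuine pointwise sum (rather than only as an element of $\lpi$) is warranted; this follows because $De_j$ lies in $\lpi$ and the coordinate functionals $\langle \cdot, e_i\rangle$ are continuous, which lets us pull $\langle \cdot, e_i\rangle$ inside the convergent series defining $Df$.
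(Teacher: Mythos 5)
Your proof is correct and follows essentially the same route as the paper: pointwise Jensen's inequality with the row-stochastic weights $d_{ij}$, followed by an interchange of summation and an appeal to column stochasticity. The only cosmetic difference is that you reduce explicitly to $f\ge 0$ via $|Df|\le D|f|$, whereas the paper absorbs this step by placing the absolute values inside the Jensen estimate directly.
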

 Proof.
 For $f=\sum_{j\in I}f(j)e_j\in \ell^p(I)$, using the continuity
 of $D$, we have
 \[ Df=\sum_{j\in I}f(j)De_j\]
 Hence
 \begin{eqnarray*}
 \|Df\|_p^p&=& \sum_{i\in I}\big|\langle D f,e_i\rangle\big|^p\\
           &=& \sum_{i\in I}\big|\sum_{j\in I}f(j)\langle De_j,
           e_i\rangle\big|^p\\
           &\leq & \sum_{i\in I}\sum_{j\in I}|f(j)|^p\langle De_j,
           e_i\rangle
 \end{eqnarray*}
  The last inequality
 has been resulted from Jensen's inequality and the fact that $D$ is row stochastic.
  Now changing the
 order of summation, and using the fact that $D$ is also column
 stochastic, we have
 \[ \|Df\|_p^p\leq \sum_{j\in I}|f(j)|^p\sum_{i\in I}\langle
 De_j,e_i\rangle=\|f\|_p^p\]
  from which the result follows.\hfill$\Box$
\\

The following proposition, which presents a simple way to construct doubly stochastic operators, will be used in next sections.

 \begin{prop}\label{prop2.6}
 Let $I$ be a non-empty set and $p\in[1,\infty)$. Then
 corresponding to a family of non-negative real numbers $\{ d_{ij}\ ;\  i,j\in I\}$
 with
\begin{eqnarray}\label{prop2.6eq1}
 \forall i\in I, ~~~ \sum_{j\in I}d_{ij}=1,\qquad \forall j\in I,
~~~ \sum_{i\in I}d_{ij}=1
\end{eqnarray}
there exists a unique doubly stochastic operator $D$ on
$\ell^p(I)$ such that
\[\langle De_j,e_i\rangle=d_{ij}.\]
 \end{prop}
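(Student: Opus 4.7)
The plan is to define $D$ on the canonical basis by the obvious formula and then extend by continuity, using essentially the same Jensen-type estimate that appears in Lemma \ref{lem norm ds}.

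First I would set $De_j := \sum_{i\in I} d_{ij}\, e_i$. Since $d_{ij}\in[0,1]$ (nonnegativity plus $\sum_i d_{ij}=1$ force $d_{ij}\leq 1$), we have $d_{ij}^p\leq d_{ij}$, so $\sum_i d_{ij}^p\leq 1$ and $De_j\in \lpi$ with $\|De_j\|_p\leq 1$. Thus $D$ is well-defined on the linear span $V$ of $\{e_j:j\in I\}$, which is dense in $\lpi$.

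Next, I would prove the key norm inequality $\|Df\|_p\leq\|f\|_p$ for $f\in V$ so that $D$ extends uniquely to a bounded linear operator on $\lpi$. For $f=\sum_{j\in F}f(j)\,e_j$ with $F$ finite, the $i$-th coordinate of $Df$ is the finite sum $\sum_{j\in F}f(j)\,d_{ij}$. Since $\{d_{ij}\}_{j\in F}$ are nonnegative with $\sum_{j\in F}d_{ij}\leq 1$, Jensen's inequality (applied to the convex function $t\mapsto |t|^p$) yields
\[
\Big|\sum_{j\in F}f(j)\,d_{ij}\Big|^p\ \leq\ \sum_{j\in F}|f(j)|^p\,d_{ij}.
\]
Summing over $i\in I$ and switching the order of summation (which is legitimate by Tonelli, as all terms are nonnegative) gives
\[
\|Df\|_p^p\ \leq\ \sum_{j\in F}|f(j)|^p\sum_{i\in I}d_{ij}\ =\ \sum_{j\in F}|f(j)|^p\ =\ \|f\|_p^p,
\]
using the column-sum hypothesis. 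Hence $D$ extends to a bounded operator on $\lpi$ with $\|D\|\leq 1$.

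Finally, I would check the remaining properties. Positivity is automatic because $d_{ij}\geq 0$, so $De_j\geq 0$ for every $j$, and positivity on $V$ extends to $\lpi$ by continuity. The identity $\langle De_j,e_i\rangle=d_{ij}$ holds by construction, which immediately gives that the row and column sums of $D$ equal $1$, so $D\in\ds$. Uniqueness follows because any $D\in\ds$ with $\langle De_j,e_i\rangle=d_{ij}$ is determined on the dense subspace $V$ by linearity, and hence everywhere by boundedness.

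The main obstacle is purely the convergence bookkeeping in the norm estimate — once Jensen's inequality is invoked as in Lemma \ref{lem norm ds}, everything else is formal. There is no hidden difficulty because the hypotheses \eqref{prop2.6eq1} give exactly the two pieces needed: the row-sum condition legitimizes Jensen, and the column-sum condition kills the surviving factor after changing the order of summation.
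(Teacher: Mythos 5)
Your proof is correct and follows essentially the same route as the paper's: the key estimate is the same Jensen inequality (justified by the row sums) followed by an interchange of summation (finished by the column sums), and the uniqueness argument is the same determination of $D$ by its matrix entries. The only difference is presentational — you define $D$ on the dense span of $\{e_j\}$ and extend by continuity, whereas the paper defines $Df$ directly for every $f\in\lpi$ by verifying the defining series converges.
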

\begin{proof}
Let $\{d_{ij};i,j\in I\}$ be a family of non-negative and real
numbers which satisfy  (\ref{prop2.6eq1}) and suppose $f=\sum_{j\in I}f(j)
e_j$ is any arbitrary element of $\lpi$. For $1\leq p<\infty$,
from Jensen's inequality, we have
\begin{eqnarray*}
|\sum_{j\in I} f(j) d_{ij}|^p  \leq  \sum_{j\in I} |f(j)|^p d_{ij}
\end{eqnarray*}
which holds for each $i\in I$. Thus
\[
\sum_{i\in I}|\sum_{j\in I} f(j) d_{ij}|^p  \leq \sum_{i\in I}
\sum_{j\in I} |f(j)|^p d_{ij} = \sum_{j\in I} \sum_{i\in I} |f(j)|^p
d_{ij} = \|f\|^p.
\]
Hence the linear operator $D:\lpi\r\lpi$ defined by
$$Df=\sum_{i\in I}\Big(\sum_{j\in I}f(j) d_{ij}\Big) e_i$$
is bounded.\\
 Since $\langle De_j,e_i\rangle=d_{ij}$ for each $i,j\in I$, by assumption we have $D\in \ds$.\\
 To show the  uniqueness of $D$, suppose $A:\lpi\r\lpi$ is a bounded linear operator which satisfies
 $\langle Ae_j,e_i \rangle =d_{ij}$, for all $i,j \in I$.
  For each $i\in I$, and $f=\sum_{j\in I}f(j) e_j \in \lpi$,
 we have
$$(Af)(i)=\langle Af,e_i\rangle =\sum_{j\in I}f(j) \langle Ae_j,e_i\rangle= \sum_{j\in I}d_{ij}f(j) = \langle Df,e_i \rangle =(Df)(i).$$
Thus $A=D$.
\end{proof}
\section{Majorization on $\ell^p(I)$}

As was pointed out in the Introduction, the notion of majorization
in finite dimension has several equivalents, each of which can be
used to extend this theory to more general spaces. Here, we take
the approach based on the doubly stochastic operators.
\begin{defn}
For two elements $f,g\in \ell^p(I)$, we say $f$ is  majorized by
$g$ (or $g$ majorizes $f$), and denote it by $f\prec g$, if there
exists a doubly stochastic operator $D\in
\mathcal{DS}\Big(\ell^p(I)\Big)$ such that $f=Dg$.
\end{defn}
In order to obtain some consequences of this definition we need
the following lemma.

\begin{lem}\label{lem3.2}
Let $-\infty \leq a<b \leq +\infty$ and
$\phi:(a,b)\rightarrow [0,+\infty)$ be a convex function. For
$f,g\in \ell^p(I)$ with $Im(f), Im(g)\subseteq (a,b)$, if
 $f\prec g$  then
 \begin{equation}
 \sum_{i\in I}\phi(f_i)\leq \sum_{i\in I}\phi(g_i),
 \end{equation}
 where $f_i=f(i)$ and $g_i=g(i) $ for
 all $i\in I$.
\end{lem}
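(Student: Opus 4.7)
The plan is to reduce the inequality to a pointwise application of Jensen's inequality followed by Fubini--Tonelli on the nonnegative double series, exploiting both the row and the column stochasticity of the doubly stochastic operator provided by the majorization.

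First, I would unpack the definition: since $f \prec g$, there is $D\in\mathcal{DS}\big(\ell^p(I)\big)$ with $f=Dg$. Setting $d_{ij}=\langle De_j,e_i\rangle$, Proposition \ref{prop2.6} (or rather the computation preceding it) gives
\[
f(i)=\langle Dg,e_i\rangle=\sum_{j\in I} d_{ij}\,g(j),\qquad \sum_{j\in I}d_{ij}=1,\quad \sum_{i\in I}d_{ij}=1,
\]
with every $d_{ij}\ge 0$. In particular, for each fixed $i\in I$, the family $\{d_{ij}\}_{j\in I}$ is a (countably supported) probability distribution, and the values $g(j)$ all lie in the open interval $(a,b)$, as does $f(i)$.

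Second, I would apply the Jensen inequality for convex combinations to obtain the pointwise estimate
\[
\phi(f_i)=\phi\!\left(\sum_{j\in I} d_{ij}\,g_j\right)\le \sum_{j\in I} d_{ij}\,\phi(g_j),\qquad i\in I.
\]
Since the paper has only invoked the finite Jensen inequality so far, the careful step is to justify this for a countable convex combination. I would do it by approximation: enumerate the (countable) support $\{j_1,j_2,\ldots\}$ of $d_{i\cdot}$, let $S_n=\{j_1,\dots,j_n\}$ and $\Lambda_n=\sum_{j\in S_n}d_{ij}$, write
\[
f_i=\Lambda_n\!\!\sum_{j\in S_n}\!\tfrac{d_{ij}}{\Lambda_n}g_j+\sum_{j\notin S_n}d_{ij}g_j,
\]
apply the finite Jensen inequality inside the first block, note that $\sum_{j\notin S_n}d_{ij}g_j\to 0$ and $\Lambda_n\to 1$, and use continuity of $\phi$ on $(a,b)$ (every convex function on an open interval is continuous) to pass to the limit. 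Because $\phi\ge 0$, the right-hand sum $\sum_{j\in S_n}d_{ij}\phi(g_j)$ is monotone in $n$ and its limit is $\sum_{j\in I}d_{ij}\phi(g_j)\in[0,+\infty]$, so the inequality survives the passage to the limit.

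Finally, I would sum over $i\in I$ and interchange the order of summation by Fubini--Tonelli (permissible since all terms are nonnegative):
\[
\sum_{i\in I}\phi(f_i)\le \sum_{i\in I}\sum_{j\in I}d_{ij}\,\phi(g_j)=\sum_{j\in I}\phi(g_j)\!\sum_{i\in I}d_{ij}=\sum_{j\in I}\phi(g_j),
\]
where the last equality uses column stochasticity of $D$. The main obstacle, as indicated, is really the bookkeeping for countable Jensen; everything else is a direct consequence of the two stochasticity conditions on $D$ together with the nonnegativity of $\phi$, which is exactly what makes the sum-interchange unconditional.
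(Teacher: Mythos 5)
Your proof is correct and follows essentially the same route as the paper: pointwise Jensen using row stochasticity, then a Tonelli interchange of the nonnegative double sum combined with column stochasticity. The only difference is that you supply the truncation argument justifying Jensen for countable convex combinations, a step the paper passes over with ``since $\phi$ is continuous and convex,'' so your write-up is if anything more complete.
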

\begin{proof}
Suppose $f=Dg$, for some $D\in \mathcal{DS}\Big(\ell^p(I)\Big)$. Hence, for each $i\in I$,
\[ f_i=\langle f,e_i\rangle =\sum_{j\in I} \langle De_j,e_i\rangle g_j\]
Since $\phi$ is continuous and convex we will obtain
\[\phi(f_i)\leq \sum_{j\in I} \langle De_j,e_i\rangle \phi(g_j)\]
Thus
\begin{eqnarray*}
\sum_{i\in I} \phi(f_i) &\leq & \sum_{i\in I}\sum_{j\in I} \langle De_j,e_i\rangle \phi(g_j)\\
&=& \sum_{j\in I}\sum_{i\in I} \langle De_j,e_i\rangle \phi(g_j)\\
&=& \sum_{j\in I} \phi(g_j)
\end{eqnarray*}
\end{proof}
\begin{cor}\label{cor3.3}
For  $f,g\in \ell^p(I)$, if $f\prec g$ and $g\prec f$ then
 \begin{equation}
 \S_{i\in I}\phi(f_i) = \S_{i\in I}\phi(g_i),
 \end{equation}
for every convex function $\phi:(a,b)\rightarrow [0,+\infty)$ with
$Im(f), Im(g)\subseteq (a,b)$.
\end{cor}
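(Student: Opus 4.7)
The plan is to derive the corollary as an immediate double application of Lemma \ref{lem3.2}. The hypotheses of the corollary are perfectly symmetric in $f$ and $g$: both $f \prec g$ and $g \prec f$ hold, and the convex function $\phi$ is defined on an interval $(a,b)$ containing both $\mathrm{Im}(f)$ and $\mathrm{Im}(g)$. Thus the lemma applies in either direction.

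First I would invoke Lemma \ref{lem3.2} with the pair $(f,g)$ to obtain $\sum_{i\in I}\phi(f_i) \leq \sum_{i\in I}\phi(g_i)$. Then I would apply it again, this time with the roles of $f$ and $g$ exchanged (using the hypothesis $g \prec f$), to obtain the reverse inequality $\sum_{i\in I}\phi(g_i) \leq \sum_{i\in I}\phi(f_i)$. Combining the two gives the desired equality.

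The only mildly delicate point is that nothing in the hypothesis forces either sum to be finite; since $\phi \geq 0$, each sum is a well-defined element of $[0,+\infty]$, and the two inequalities force equality in $[0,+\infty]$ regardless of finiteness. No further estimates, no separate treatment of cases, and no appeal to the structure theorems for doubly stochastic operators is needed; the corollary is essentially a one-line consequence of the lemma, so there is no real obstacle to overcome.
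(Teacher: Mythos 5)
Your proof is correct and is exactly the intended argument: the paper states this as an immediate corollary of Lemma \ref{lem3.2}, obtained by applying the lemma once to $f\prec g$ and once to $g\prec f$. Your remark that the two inequalities force equality even when both sums are $+\infty$ is a valid and careful observation that the paper leaves implicit.
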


\noindent
It must be noted that the converse of this corollary is not true
in general.

\begin{ex}\label{ex f=sum 1/2^n  e_n}
 {\rm
For $f=\sum_{n\in \mathbb N}{1\over 2^n}e_{n+1}$ and $ g=\sum_{n\in
\mathbb N}{1\over 2^n}e_n$ in $ \lpn$, let $\phi:(a,b)\r [0,\infty)$
be a convex function with $Im(f),Im(g)\subseteq (a,b)$.
First, suppose $\phi (0)>0$. Then
$$\lim_{n\r\infty}\phi
(f_n)=\lim_{n\r\infty}\phi (g_n)=\phi (0)>0 ,$$
which shows that
$$\sum_{n\in \mathbb N}\phi (f_n)=\sum_{n\in \mathbb N}\phi (g_n)=+\infty.$$
If $\phi(0)=0$ then
$$\sum_{n\in \mathbb N}\phi (f_n)=\sum_{n\in \mathbb N}\phi ({1\over 2^n})=\sum_{n\in \mathbb N}\phi (g_n)<+\infty.$$
Hence for every convex function $\phi:(a,b)\r [0,+\infty)$ we have $\sum_{n\in \mathbb N}\phi (f_n)=\sum_{n\in \mathbb N}\phi (g_n)$.

Now if for some doubly stochastic $D\in \ds$,~ $f=Dg$, then the
equality
$$0=f_1=\sum_{n\in \mathbb N}\langle De_n,e_1\rangle g_n = \sum_{n\in \mathbb N}{\langle De_n,e_1\rangle \over 2^n}$$
 implies  $\langle De_n,e_1\rangle =0$, for all $n\in \mathbb N$.
 Thus $\sum_{n\in \mathbb N}\langle De_n,e_1\rangle =0$ which
 contradicts the fact that $D$ is doubly stochastic. Hence $f\not\prec g$. A similar argument shows even $g\not\prec f$.
 \rm}
\end{ex}
The following theorem, which is our main result in this section,
 will play a crucial rule in the next section.
\begin{thm}\label{thm3.4}
For  $f,g\in \ell^p(I)$ the following conditions are equivalent.
 \begin{itemize}
\item[{\rm(1)}]$f\prec g$ and $g\prec f$.
\item[{\rm(2)}] there
is a permutation $P\in\mathcal P\Big(\ell^p(I)\Big)$
 such that $f=P g$.
\end{itemize}
\end{thm}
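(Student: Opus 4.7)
The direction $(2) \Rightarrow (1)$ is immediate: if $Pe_j = e_{\t(j)}$ for a bijection $\t : I \r I$, then $P$ is doubly stochastic by Definition \ref{defn1.1}, and so is $P^{-1}$ (the permutation associated with $\t^{-1}$). Hence $f = Pg$ gives $f \prec g$ and $g = P^{-1}f$ gives $g \prec f$. For $(1) \Rightarrow (2)$ my plan has three stages: use Corollary \ref{cor3.3} with a strictly convex test function to force Jensen's inequality to be an equality in every row of a doubly stochastic operator carrying $g$ to $f$; then show that this rigidity confines the operator's support to the bipartite level-set block $g^{-1}(v) \times f^{-1}(v)$ for each value $v$, so that Theorem \ref{|I|=|J|} matches cardinalities $|f^{-1}(v)| = |g^{-1}(v)|$; finally, glue bijections on matching level sets into a single permutation.

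Assume $f = Dg$ and $g = Ef$ with $D, E \in \ds$, and write $e_{nj} := \langle Ee_j, e_n \rangle$. Choose a strictly convex $\phi \geq 0$ with $\sum_i \phi(f_i)$ and $\sum_i \phi(g_i)$ finite: take $\phi(t) = |t|^p$ if $p > 1$, and $\phi(t) = t^2$ if $p = 1$, the latter being summable since $\ell^1(I) \subset \ell^2(I)$ under counting measure. Jensen applied to the row distribution $(e_{nj})_j$ gives $\phi(g(n)) \leq \sum_j e_{nj} \phi(f(j))$; summing over $n$ and invoking column stochasticity of $E$ yields $\sum_n \phi(g_n) \leq \sum_j \phi(f_j)$, which is actually an equality by Corollary \ref{cor3.3}. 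Since every summand is non-negative, equality must hold row-by-row, and the strict convexity of $\phi$ then forces $f$ to be constant (with common value $g(n)$) on $\{j : e_{nj} > 0\}$.

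Fix $v \in \reals$ and set $A_v = f^{-1}(v)$, $B_v = g^{-1}(v)$. The previous paragraph says that for $n \in B_v$ the $n$-th row of $E$ is supported in $A_v$; symmetrically, if $j \in A_v$ and $e_{nj} > 0$ then $g(n) = f(j) = v$, so $n \in B_v$. Consequently the restricted family $(e_{nj})_{n \in B_v,\, j \in A_v}$ has all row sums and all column sums equal to $1$, and by Proposition \ref{prop2.6} defines a doubly stochastic operator from $\ell^p(A_v)$ to $\ell^p(B_v)$; Theorem \ref{|I|=|J|} then gives $|A_v| = |B_v|$ for every $v$ (the two sets are simultaneously empty or non-empty). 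Since $I = \bigsqcup_v A_v = \bigsqcup_v B_v$, I pick a bijection $\s_v : B_v \r A_v$ for each occupied $v$ and glue them into a bijection $\s : I \r I$ satisfying $f(\s(n)) = g(n)$ for all $n$; defining $P$ by $Pe_j = e_{\s(j)}$ then produces a permutation in $\Plpi$ with $Pg = f$.

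The main technical obstacle is the passage from the summed Jensen inequality to a term-by-term equality, which hinges on having $\sum_i \phi(f_i)$ finite; this is precisely why the case $p = 1$ requires the tailored choice $\phi(t) = t^2$ rather than the natural $|t|^p$. Once this rigidity step is secured, the remainder is level-set bookkeeping driven by Theorem \ref{|I|=|J|}.
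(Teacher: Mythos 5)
Your proof is correct, but it takes a genuinely different route from the paper's. The paper never invokes the equality case of Jensen's inequality: instead it tests Corollary \ref{cor3.3} against the piecewise--linear ``hinge'' functions $\phi_c(x)=(x-c)\chi_{[c,\infty)}(x)$ and runs an induction down the decreasing enumeration of the positive values of $f$ and $g$ to show $f_n=g_n$ and $|I_f^n|=|I_g^n|$ level by level; it then repeats the argument for $-f,-g$ to handle the negative part, and only at the very end extracts a doubly stochastic block on $I_g^0\times I_f^0$ to match the zero level sets via Theorem \ref{|I|=|J|}. You instead use a single strictly convex summable test function ($|t|^p$ for $p>1$, $t^2$ for $p=1$) and the rigidity of Jensen's inequality to confine the support of $E$ to the blocks $B_v\times A_v$ for \emph{all} values $v$ simultaneously --- positive, negative and zero are treated uniformly, every block is doubly stochastic, and Theorem \ref{|I|=|J|} matches all cardinalities in one stroke. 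This is shorter and avoids both the induction and the three-way case split; what it costs is (a) a justification of the equality case of Jensen for countably infinite convex combinations (a supporting-line argument: with $m=\sum_j e_{nj}f(j)$ and $s$ a subgradient of $\phi$ at $m$, each term $e_{nj}[\phi(f(j))-\phi(m)-s(f(j)-m)]$ is nonnegative and must vanish, forcing $f(j)=m$ wherever $e_{nj}>0$), which the paper's elementary route sidesteps, and (b) a two-index-set version of Proposition \ref{prop2.6} to manufacture the operator $\ell^p(A_v)\to\ell^p(B_v)$ --- the paper's Proposition is stated only for a single index set, though its proof and Theorem \ref{|I|=|J|} itself already accommodate distinct sets, so this is cosmetic. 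Your observation that $A_v$ and $B_v$ are simultaneously empty or not (via the unit row and column sums) correctly replaces the paper's separate argument that $I_f^+\neq\emptyset$ iff $I_g^+\neq\emptyset$.
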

\begin{proof}
For each $f\in \lpi$ let $I_f^+, I_f^0$ and $I_f^-$
 be defined as follows.
 \begin{eqnarray*}
 I_f^+=\{i\in I;f(i)>0\},\\
  I_f^0=\{i\in I;f(i)=0\},\\
  I_f^-=\{i\in I;f(i)<0\}.
 \end{eqnarray*}
It is clear that both $\ifp$ and $\ifm$ are at most countable.
Let $\{\ifn; n\in \mathbb N\}$ be a family of subsets of $\ifp$
defined inductively as follows.
$$I_f^1 :=\Big\{ i\in \ifp; f(i)=\max\{f(j); j\in\ifp\}  \Big\}$$
and for $n>1$,
$$I_f^n :=\Big\{ i\in \ifp; f(i)=\max\{f(j); j\in\ifp\smallsetminus \v\bigcup_{k=1}^{n-1} \ifk\}  \Big\}$$
It is easily seen that $\ifn$ is at most a finite set, and that if
$\ifp$ is infinite, then $\ifn\neq \emptyset$, for each $n\in
\mathbb N$. Moreover, the family  $\{\ifn; n\in \mathbb N\}$ is
mutually disjoint and $\ifp =\v\cup_{n\in \mathbb N} I_f^n$. For
$n\in \mathbb N$ with $\ifn\neq \emptyset$, let $f_n>0$ be the
value of $f$ on $\ifn$. If $\ifn = \emptyset$ then we define $f_n$
equal $0$. It is clear that for $m,n\in \mathbb N$ with $\ifn$ and
$I_f^m$ non-empty, if $n<m$ then $f_m<f_n$.

 Now for $f,g\in \lpi$,
suppose $f\prec g$ and $ g\prec f$. Let $\phi_c:\mathbb R\r
\mathbb R$ be a convex function defined by
$\phi_c(x)=(x-c)\chi_{_{[c,\infty)}}(x)$, with $c\in \mathbb
R$. By Corollary \ref{cor3.3},
\begin{equation}\label{thm3.4eq1}
\sum_{i\in I} \phi_c\big(f(i)\big)=\sum_{i\in
I}\phi_c\big(g(i)\big)
\end{equation}
for each $c\in \mathbb R$. For $c=0$, we have
\begin{equation}\label{thm3.4eq2}
\sum_{i\in \ifp} \phi_0\big(f(i)\big)=\sum_{i\in I}\phi_0\big(f(i)\big)=\sum_{i\in I}\phi_0\big(g(i)\big) =\sum_{i\in \igp} \phi_0\big(g(i)\big)
\end{equation}
which shows that $\ifp \neq \emptyset$ if and only if
 $\igp\neq\emptyset$. Suppose $\ifp\neq \emptyset$.
Using induction,  we show that for each $n\in \mathbb N$,
\begin{itemize}
 \item[{\rm(i)}] $f_n=g_n$,
  \item[{\rm(ii)}] $|I_f^n|= |I_g^n|$.
 \end{itemize}

For $n=1$, if $I_f^1=\emptyset$, then $f\leq 0$  and therefore
$\ifp=\emptyset$ which is contrary to our assumption. Hence
$I_f^1\neq\emptyset$. Similarly $I_g^1\neq \emptyset$. Suppose
$f_1\neq g_1$ and, for example,  $f_1<g_1$. Then for each $i_1\in
I_f^1$ and $i_2\in I_g^1$, $f(i_1)=f_1<g_1=g(i_2)$. Using the
convex function $\phi_c$ with $c=\min\{f_1,g_1\}$, we have
$$\S_{i\in
I}\phi_c\Big(f(i)\Big)=0 <     g_1-f_1   \leq \S_{i\in I}
\phi_c\Big(g(i)\Big)$$ which contradicts (\ref{thm3.4eq1}). Hence
$f_1=g_1$. Again, taking $c=\max\{f_2,g_2\}$ in  (\ref{thm3.4eq1}),
we have
$$(f_1-c)|I_f^1|=\sum_{i\in I} \phi_c\Big(f(i)\Big)=\sum_{i\in I} \phi_c\Big(g(i)\Big) = (g_1-c)|I_g^1|.$$
Hence ${\rm (i)}$ and ${\rm(ii)}$ are satisfied for $n=1$.\\
 Suppose ${\rm (i)}$ and ${\rm (ii)}$ hold for each
 $k=1,\dots,  n$. If $I_f^{n+1} =\emptyset$ then $I_f^j=\emptyset$
 for all $j\geq n+1$. Hence, using once more
 equation (\ref{thm3.4eq2}), we will have
 \[ \sum_{k=1}^n f_k  |I_f^k| = \sum_{i\in I} \phi_0\Big(f(i)\Big) =
  \sum_{i\in I} \phi_0\Big(g(i)\Big) \geq \sum_{k=1}^{n+1} g_k
  |I_g^k|\]
  which implies that the term $g_{n+1}|I_g^{n+1}|$ is non-positive. Hence
  $I_g^{n+1}=\emptyset$. In this case, $f_{n+1}=g_{n+1}=0$, i.e.
  (i) and (ii) are satisfied for $n+1$. If $I_f^{n+1}
  \neq\emptyset$ then the same argument shows that $I_g^{n+1}\neq
  \emptyset$.  In this case, a similar procedure to that of $n=1$, once with $c=\min\{f_{n+1},g_{n+1}\}$ and then
  with   $c=\max\{f_{n+2},g_{n+2}\}$ in (\ref{thm3.4eq1}), implies (i) and (ii)
for $n+1$.

 By (ii), there is a bijection $\theta _n:I_g^n\r I_f^n$ for each
 $n\in \mathbb N$ with $I_f^n\neq\emptyset$.
 Now
we can define a bijection $\theta^+:I_g^+=\cup_{n\in \mathbb
N}I_g^n\r I_f^+$ given by $\theta^+(j)=\theta_n(j)$ if $j\in
I_g^n$.\\

Let $D$ be a doubly stochastic operator on $\lpi$ satisfying
$f=Dg$. For simplicity, we let $d_{ij}:=\langle De_j,e_i\rangle$, for each $i,j\in I$. We show that if $I_f^n\neq \emptyset$ then
\begin{equation}\label{eq ii}
\forall i\in\ifn,\qquad \sum_{j\in\ign}\dij=1
\end{equation}
and
\begin{equation}\label{eq iii}
\forall j\in \ign,\qquad \sum_{i\in\ifn}\dij=1
\end{equation}
To prove (\ref{eq ii}) and (\ref{eq iii}), first
suppose $n=1$ and that $I_f^1\neq \emptyset$. We show that
$\sum_{j\notin I_g^1}d_{ij}=0$, for all $i\in I_f^1$, which then implies
(\ref{eq ii}) for $n=1$. If for some $i\in I_f^1$,~ $\sum_{j\notin
I_g^1}d_{ij}>0$, then
$$0<f_1=f(i)=\sum_{j\in I}d_{ij}g(j)=\sum_{j\in I_g^1}d_{ij}g_1+\sum_{j\notin I_g^1}d_{ij}g(j)<\sum_{j\in I_g^1}d_{ij}g_1+\sum_{j\notin I_g^1}d_{ij}g_1=g_1$$
which contradicts the fact that $f_1=g_1$. Hence we have shown that
 $d_{ij}=0$, for each $i\in
I_f^1$ and $j\notin I_g^1$, and therefore, $\sum_{j\in I_g^1}d_{ij}=1$.

To see (\ref{eq iii}) for $n=1$, suppose there exists $j\in I_g^1$ with $\sum_{i\in
I_f^1}d_{ij}<1$, then
$$|I_f^1|=\sum_{i\in I_f^1}\sum_{j\in I_g^1}d_{ij}=\sum_{j\in I_g^1}\sum_{i\in I_f^1}d_{ij}<|I_g^1|,$$
which contradicts (ii) for $n=1$.\\
By induction and using a similar method, we will see that (\ref{eq ii}) and (\ref{eq iii}) hold
for each $n\in \mathbb N$.

An immediate consequence of the above facts is that,
\begin{itemize}
 \item[{\rm(i)}]  $\forall i\in I_f^+~~\forall j\in I\setminus I_g^+,~~~\dij=0$
 \item[{\rm(ii)}] $\forall j\in I_g^+~~\forall i\in I\setminus I_f^+,~~~\dij=0$
 \end{itemize}

Replacing $f$ and $g$ by $-f$ and $-g$ and noting that $I_f^-=I_{-f}^+$ and $I_g^-=I_{-g}^+$, we obtain a similar
bijection $\theta^-:I_g^-\r I_f^-$, and the following results.
\begin{itemize}
 \item[{\rm(iii)}]  $\forall i\in I_f^-~~\forall j\in I\smallsetminus I_g^-,~~~\dij=0$
 \item[{\rm(iv)}] $\forall j\in I_g^-~~\forall i\in I\smallsetminus I_f^-,~~~\dij=0$
 \end{itemize}

Using all above facts, it is easily verified that if $D_0:\ell^p(I_g^0)\r\ell^p(I_f^0)$ is
the map defined by $\langle D_0e_j,e_i\rangle=\dij$, for $i\in
I_f^0$ and   $j\in I_g^0$, then it is doubly stochastic. By
Theorem \ref{|I|=|J|},
 $|I_f^0|=|I_g^0|$, i.e.
there exists a bijection $\theta^0:I_g^0\r I_f^0$. Now we
define a bijection $\theta:I\r I$ by

\begin{eqnarray*}
\begin{array}{l}
\theta(j)=\left\{
\begin{array}{ll}
\theta^+(j)    &j\in I_g^+\\[.3cm]
 \theta^-(j)    &j\in I_g^-\\[.3cm]
\theta^0(j)&j\in I_g^0
\end{array}
\right.
\end{array}
\end{eqnarray*}

Let $P$ be the permutation on $\lpi$ corresponding to $\theta$ . We
claim  that $f=P g$. To see this, note that for each $i\in I$,
$$\big(P g\big)(i)=\langle\sum_{j\in I}
g(j)e_{_{\theta(j)}}, e_i\rangle(i)=g\big(\theta^{-1}(i)\big)$$
If $i\in I_f^+$ then there exists $n\in \mathbb N$ such that $i\in I_f^n$, and therefore $\theta^{-1}(i)\in I_g^n$.
Hence $g\big(\theta^{-1}(i)\big)=g_n=f_n=f(i)$. A similar argument holds if $i\in I_f^-$. Finally, if $i\in I_f^0$, then $\theta^{-1}(i)\in I_g^0$. Hence $g\big(\theta^{-1}(i)\big)=0=f(i)$.
Thus, $Pg(i)=f(i)$, for all $i\in I$, i.e. $f=Pg$.

The converse is evident.
\end{proof}


\section{ Linear Maps  Preserving  Majorization }

 In this section, we characterize  bounded linear operators on $\lpi$, with $p\in (1,+\infty)$ and  $I$ an infinite set, which preserves the majorization relation.
\begin{defn}\label{defn4.1}
A bounded linear operator $T:\ell^p(I)\rightarrow \ell^p(I)$ is
called a majorization preserver on $\ell^p(I)$, if $T$ preserves
the majorization relation, i.e. for $f,g\in \ell^p(I)$, $f\prec g$
implies $Tf\prec Tg$. We denote  by $\Mpr$  the set of all such
operators.
\end{defn}

In order to have some examples of this class of operators, we need first some preliminaries.
It is easily seen that for $\a \in \mathbb R$ and $S,T\in \Mpr$,~
$\a T, ST \in \Mpr$, i.e. $\Mpr$ is closed under the scalar
multiplication and combination. We will see later that this set  is not closed under addition.

For a one-to-one map $\s:I\r I$,  let $P_{\s}:\lpi\r\lpi$ be defined for each for $f=\sum_{j\in I}f_j
e_j\in \lpi$ by
$P_{\s}(f)=\sum_{j\in I}f_j e_{\s (j)}$. Clearly, $P_{\s}$ is a bounded linear operator with $\|P_{\s}\|\leq 1$. Note that if, in addition, $\s :I\r I$ is on-to then $P_{\s}$ is a permutation.

\begin{lem}\label{d tilde}
Let $D:\lpi\r\lpi$ be a doubly stochastic operator and $\Sigma$ be any family of one-to-one maps from $I$ to $I$
which satisfies $\s_1(I)\cap \s_2(I) =\emptyset$, for distinct
$\s_1,\s_2\in \Sigma$. Then
there exists a doubly stochastic $\widetilde{D}\in \ds$ such that
$P_{\s} D = \widetilde{D}P_{\s}$, for all $\s\in \Sigma$.
\end{lem}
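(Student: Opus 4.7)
The plan is to construct $\widetilde{D}$ explicitly via its matrix coefficients $\widetilde{d}_{ij}=\langle\widetilde{D}e_j,e_i\rangle$, verify double stochasticity through Proposition \ref{prop2.6}, and then check the intertwining relation on the basis $\{e_j\}$.

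To guess the coefficients, I first apply the desired identity $P_{\s}D=\widetilde{D}P_{\s}$ to $e_j$: since $P_{\s}e_j=e_{\s(j)}$, this forces $\widetilde{D}e_{\s(j)}=P_{\s}De_j=\sum_{i\in I}d_{ij}\,e_{\s(i)}$. Because the sets $\{\s(I):\s\in\Sigma\}$ are pairwise disjoint, the indices $\s(j)$ (as $\s$ and $j$ vary) are all distinct, so these prescriptions are consistent and determine a column of $\widetilde{D}$ for every $j$ lying in $I_\Sigma:=\bigcup_{\s\in\Sigma}\s(I)$. Setting $I_0:=I\setminus I_\Sigma$, I then define
\[
\widetilde{d}_{ij}:=\begin{cases} d_{\s^{-1}(i)\,\s^{-1}(j)} & \text{if }i,j\in\s(I)\text{ for some (unique) }\s\in\Sigma,\\[1mm] 1 & \text{if }i=j\in I_0,\\[1mm] 0 & \text{otherwise.}\end{cases}
\]
In particular, $\widetilde{d}_{ij}=0$ whenever $i$ and $j$ lie in different blocks of the partition $\{\s(I):\s\in\Sigma\}\cup\{\{k\}:k\in I_0\}$.

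Next I would check the hypotheses of Proposition \ref{prop2.6}. Non-negativity of all $\widetilde{d}_{ij}$ is immediate. For row sums: if $i\in\s(I)$, only columns $j\in\s(I)$ contribute, and $\sum_{j\in\s(I)}\widetilde{d}_{ij}=\sum_{j'\in I}d_{\s^{-1}(i),j'}=1$ because $D$ is row stochastic; if $i\in I_0$, the sum is trivially $1$. The column sums are handled identically using that $D$ is column stochastic. Hence by Proposition \ref{prop2.6} there is a (unique) doubly stochastic operator $\widetilde{D}\in\ds$ realising these coefficients.

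Finally I verify $P_{\s}D=\widetilde{D}P_{\s}$ for each $\s\in\Sigma$. Since both sides are bounded linear (using Lemma \ref{lem norm ds} for $D,\widetilde{D}$ and $\|P_\s\|\le 1$), it suffices to check on basis vectors. For $e_j$ one has $P_{\s}De_j=\sum_{i\in I}d_{ij}e_{\s(i)}$, while
\[
\widetilde{D}P_{\s}e_j=\widetilde{D}e_{\s(j)}=\sum_{k\in I}\widetilde{d}_{k,\s(j)}e_k=\sum_{i\in I}d_{ij}e_{\s(i)},
\]
by the definition of $\widetilde{d}$, since the only nonzero entries in column $\s(j)$ sit in rows $k=\s(i)\in\s(I)$. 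The two sides agree, completing the proof. The only delicate point — and the step I expect to be the main obstacle to present cleanly — is justifying that the definition of $\widetilde{d}_{ij}$ is unambiguous, which is exactly where the disjointness hypothesis $\s_1(I)\cap\s_2(I)=\emptyset$ is used; everything else is routine bookkeeping.
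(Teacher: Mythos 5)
Your proposal is correct and follows essentially the same route as the paper: the same explicit matrix coefficients $\widetilde{d}_{ij}$ (identical case split, with your ``otherwise $0$'' collapsing the paper's two zero cases), the same verification of row and column sums feeding into Proposition \ref{prop2.6}, and the same check of the intertwining relation on the basis vectors $e_j$. Your explicit remark that the pairwise disjointness of the sets $\s(I)$ is what makes the definition of $\widetilde{d}_{ij}$ unambiguous is a point the paper leaves implicit, but the argument is otherwise the same.
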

\begin{proof}
For $i,j\in I$, let $d_{ij}:=\langle De_j,e_i\rangle$ and suppose  $\tilde{d}_{ij}$ is defined by
  \begin{equation}\label{dtildeij1}
\tilde{d}_{ij}=\left\{
\begin{array}{lll}
d_{\s^{-1}(i) \s^{-1}(j)}    &\mbox{if}&  i,j\in\s(I)~~(\ \mbox{for some}\  \s\in \Sigma)\\[.2cm]
~~0~~                                &\mbox{if}&  i\in \s(I),~j\notin\s(I)~(\ \mbox{for some}\  \s\in \Sigma)\\[.2cm]
~~1~~                              &\mbox{if}&  i\notin \cup_{\s\in\Sigma} \s(I),~j=i\\[.2cm]
~~0~~                            &\mbox{if}&  i\notin \cup_{\s\in\Sigma} \s(I),~j\neq i\\
\end{array}
\right.
\end{equation}
 By considering the two cases $i\in \s(I)$, for some
$\s\in\Sigma$, and  $i\notin \cup_{\s\in\Sigma} \s(I)$, it is easy
to see that $\sum_{j\in I} \tilde{d}_{ij} =1$ for each $i\in I$.
Similarly, writing (\ref{dtildeij1}) in the following form,
  \begin{equation*}
\tilde{d}_{ij}=\left\{
\begin{array}{lll}
d_{\s^{-1}(i) \s^{-1}(j)}    ~~&\mbox{if}&j,i\in\s(I)~(\ \mbox{for some}\ \s\in \Sigma)\\[.2cm]
~~0~~                                ~~&\mbox{if}&j\in \s(I),~i\notin\s(I)~(\ \mbox{for some}\ \s\in \Sigma)\\[.2cm]
~~1~~                              ~~&\mbox{if}&j\notin \cup_{\s\in\Sigma} \s(I),~i=j\\[.2cm]
~~0~~                            ~~&\mbox{if}&j\notin \cup_{\s\in\Sigma} \s(I),~i\neq j,\\
\end{array}
\right.
\end{equation*}
it is seen that $\sum_{i\in I} \tilde{d}_{ij} =1$ for each $j\in
I$. Hence, using Proposition $\ref{prop2.6}$, there exists a doubly stochastic
operator $\widetilde{D}:\lpi\r\lpi$ which satisfies $\langle \tilde{D}
e_j,  e_i \rangle = \tilde{d}_{ij}$ for all $i,j \in I$.

 It remains to show that $ P_{\s}D = \widetilde{D}P_{\s}$, for each
$\s\in\Sigma$. We have
$$\widetilde{D}P_{\s}(e_j) = \widetilde{D}(e_{_{\!\s(j)}})  = \S_{i\in \s(I)} \tilde{d}_{i\s(j)}e_i  =
\S_{i\in \s(I)} d_{\s^{-1}(i)j}e_i  = \S _{r\in I} d_{rj}
e_{_{\!\s(r)}}$$
 and $$P_{\s}D(e_j)= P_{\s}\Big( \S_{i\in I}d_{ij} e_i \Big )= \S_{i\in I}d_{ij} e_{_{\!\s(i)}}.$$
Hence
\begin{equation*}
  P_{\s}D(e_j) = \widetilde{D}P_{\s}(e_j),
\end{equation*}
for all $j\in I$. Thus
  $\widetilde{D}P_{\s}  =  P_{\s}\!D$,
  for each $\s\in \Sigma$.
\end{proof}
\begin{ex}\label{ex1}{\rm
Let $\s:I\r I$ be a one-to-one map. For $f,g\in \lpi$ suppose
$f\prec g$, i.e. $f=Dg$ for some $D\in \ds$. By Lemma \ref{d
tilde}, corresponding to the singleton $\Sigma=\{\s\}$, there exists $\widetilde{D}\in \ds$ for which
$P_{\s}D=\widetilde{D}P_\s$. Therefore $P_{\s}f=P_{\s}Dg=\widetilde{D}P_\s
g$, which shows that $P_{\s}f\prec P_{\s}g$. Thus each $P_{\s}$
preserves majorization. In particular each permutation belongs to
$\Mpr$, i.e.
\[
\Plpi\subseteq\Mpr.
\]
\rm}
\end{ex}

\begin{ex}\label{ex2}{\rm
For a fixed $k\in \mathbb N$, let $T:\lpn\r\lpn$ be an operator
defined for
$f=\sum_{n=1}^{\infty}f_ne_n\in \lpn$ by $T(f)=\sum_{n=1}^{\infty}f_{[\frac{n}{k}]}e_n$,  where $[\frac{n}{k}]$ denotes the
greatest integer contained in $\frac{n}{k}$, and $f_0:=0$. Then, $T$ is easily seen
to be linear and bounded (with $\|T\|=\sqrt[p]{k}$). Suppose
$\Sigma =\{\s_1,\dots ,\s_k\}$ where each $\s_i:\mathbb N\r\mathbb
N$ (for $1\leq i\leq k$) is a one-to-one map defined by
$\s_i(n)=nk+i-1$, for all $n\in \mathbb N$. It is easy to see that
$T=\sum_{i=1}^kP_{\s_i}$ and that the family $\Sigma$ satisfies
condition of Lemma \ref{d tilde}. If $f\prec g$ in $\lpi$, i.e.
$f=Dg$ for some $D\in \dslpn$, then Lemma \ref{d tilde} implies
that there exists  a doubly stochastic $\tilde{D}\in {\mathcal
DS}\big (\lpn\big )$ for which $\tilde{D}P_{\s_i}=P_{\s_i}D$ for
$i=1,\dots,k.$ Hence
$$Tf=\sum_{i=1}^k P_{\s_i}f=\sum_{i=1}^k P_{\s_i}Dg=\sum_{i=1}^k \tilde{D}P_{\s_i}g=\tilde{D}Tg$$
i.e. $T$ preserves majorization. \rm}
 \end{ex}

 In the following theorem, which is a generalization of Example \ref{ex2}, we construct a family of bounded linear operators which preserve majorization. As we will see in Theorem \ref{thm4.9}, in the case $1<p<+\infty$, every  majorization preserver will also be in this form.
\begin{thm}\label{thm4.5}
Let $p\in [1,+\infty)$, $I$ be an infinite set and $I_0\subset I$ be a countable subset. Moreover, suppose $\Sigma=\{\s_i:I\r I\ ;\ i\in I_0\}$ is a family of one-to-one maps such that
 for all  $i_1,i_2 \in I_0$ with $i_1\neq i_2$,
 $\sigma_{i_1}(I)\cap \sigma_{i_2}(I)=\emptyset$. If $(\alpha_i)_{i\in I_0}$ is an element of $\ell^p(I_0)$ then $T:=\sum_{i\in I_0}\alpha_i P_{\s_i}$ is a majorization preserver.
\end{thm}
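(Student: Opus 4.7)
The plan is to mimic Example \ref{ex2}, replacing the finite sum by the possibly infinite one indexed by $I_0$, with Lemma \ref{d tilde} doing the main work; the only thing really to verify is that all the infinite sums make sense in $\ell^p(I)$.

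First I would check that $T$ is a well-defined bounded operator on $\lpi$. Fix $f\in \lpi$. Each $\alpha_i P_{\s_i}f$ is supported in $\s_i(I)$, and by hypothesis these supports are pairwise disjoint as $i$ ranges over $I_0$. Consequently, for any finite $F\subset I_0$,
\[
\Big\|\sum_{i\in F}\alpha_i P_{\s_i}f\Big\|_p^p=\sum_{i\in F}|\alpha_i|^p\,\|P_{\s_i}f\|_p^p=\Big(\sum_{i\in F}|\alpha_i|^p\Big)\|f\|_p^p,
\]
since $\|P_{\s_i}f\|_p=\|f\|_p$. Because $(\alpha_i)_{i\in I_0}\in\ell^p(I_0)$, the net of partial sums is Cauchy and the series $Tf=\sum_{i\in I_0}\alpha_i P_{\s_i}f$ converges unconditionally in $\lpi$, with $\|Tf\|_p\le \|(\alpha_i)\|_p\,\|f\|_p$. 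Linearity is clear, so $T$ is bounded.

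Next, given $f\prec g$ in $\lpi$, pick $D\in\ds$ with $f=Dg$. The family $\Sigma=\{\s_i:i\in I_0\}$ satisfies the disjointness hypothesis of Lemma \ref{d tilde}, so we obtain a single $\widetilde D\in\ds$ such that
\[
P_{\s_i}D=\widetilde D\,P_{\s_i}\qquad\text{for every }i\in I_0.
\]
The key point is that one and the same $\widetilde D$ works for all $i\in I_0$ simultaneously; this is exactly what the lemma was set up to give.

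Finally I would put these pieces together. Since $\widetilde D$ is bounded (even $\|\widetilde D\|\le 1$ by Lemma \ref{lem norm ds}) and the series defining $Tg$ converges in $\lpi$,
\[
\widetilde D\,Tg=\widetilde D\Big(\sum_{i\in I_0}\alpha_i P_{\s_i}g\Big)=\sum_{i\in I_0}\alpha_i \widetilde D\,P_{\s_i}g=\sum_{i\in I_0}\alpha_i P_{\s_i}Dg=\sum_{i\in I_0}\alpha_i P_{\s_i}f=Tf.
\]
Thus $Tf=\widetilde D\,Tg$ with $\widetilde D\in\ds$, i.e.\ $Tf\prec Tg$, and $T\in\Mpr$.

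The only nontrivial step is the first paragraph: one must know that $(\alpha_i)\in\ell^p(I_0)$ (rather than $\ell^1$) is enough to make the series defining $T$ converge, and the disjointness of the ranges $\s_i(I)$ is precisely what allows this through a componentwise Pythagoras-type identity. The rest is a direct application of Lemma \ref{d tilde} together with continuity of $\widetilde D$ for moving it through the convergent series.
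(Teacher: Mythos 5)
Your proof is correct and follows essentially the same route as the paper: invoke Lemma \ref{d tilde} to get a single $\widetilde D$ intertwining with every $P_{\s_i}$, then pass $\widetilde D$ through the series by continuity. The only difference is that you spell out the well-definedness of $T$ (via the disjoint supports and $(\alpha_i)\in\ell^p(I_0)$), which the paper dismisses as ``easily seen''; your argument for that step is correct.
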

\begin{proof}
It is easily seen that   $T=\sum_{i\in I_0}\a_i P_{\s_i}$ is a well-defined bounded linear map.
Suppose
 $f\prec g$, for $f,g \in \lpi$, and therefore $f=Dg$  for some  $D\in \ds$.\\
 Corresponding to the family $\{\sigma_i:I\r I;~i\in I_0\}$, let $\widetilde{D}\in \ds$  be the operator given by Lemma
 \ref{d tilde}. Then
\begin{eqnarray*}
\widetilde{D}(T g)    &=&    \widetilde{D}\Big(\S_{i\in I_0} \a_i P_{\s_i}(g)\Big)\\
           &=&    \S_{i\in I_0} \a_i \widetilde{D} P_{\s_i}(g)\\
           &=&    \S_{i\in I_0} \a_i P_{\s_i}D(g)\\
                      &=&    \S_{i\in I_0} \a_i P_{\s_i}(f)\\
                      &=&    Tf\\
\end{eqnarray*}
Hence $Tf\prec Tg$.
\end{proof}
\\

As was pointed out, the converse of this theorem is also true for $p\in (1,+\infty)$. In order to prove it, we need some lemmas.

\begin{lem}\label{alpha}
Let $a,b\in \mathbb R$ and $\{a_i;i\in I\}\ ,\ \{b_i;i\in I\}$ be two
families of real numbers, where $I$ is assumed to be a countable indexed set. If
$$\a a+\b b\in \{\a a_i+\b b_i;i\in I\},$$
for all $\alpha,\beta \in \mathbb R$, then there exists $i\in I$
such that $a=a_i$ and $b=b_i$.
\end{lem}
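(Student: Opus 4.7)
The plan is to argue by contradiction. Suppose that $(a,b)\neq(a_i,b_i)$ for every $i\in I$; equivalently, $(a-a_i,\,b-b_i)\neq(0,0)$ for all $i$. For each $i$ introduce the set
\[
L_i=\{(\alpha,\beta)\in\mathbb R^2\,:\,\alpha(a-a_i)+\beta(b-b_i)=0\}.
\]
Since $(a-a_i,b-b_i)$ is a nonzero vector in $\mathbb R^2$, each $L_i$ is a one-dimensional linear subspace of $\mathbb R^2$, i.e.\ a line through the origin. The hypothesis that $\alpha a+\beta b\in\{\alpha a_i+\beta b_i:i\in I\}$ for all $\alpha,\beta\in\mathbb R$ translates precisely into the statement
\[
\mathbb R^2=\bigcup_{i\in I}L_i.
\]

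First I would now use the countability of $I$ to derive a contradiction. The family $\{L_i\}_{i\in I}$ is a countable collection of proper linear subspaces of $\mathbb R^2$. Parametrize the directions in $\mathbb R^2$ by the unit circle $S^1$; each line $L_i$ meets $S^1$ in exactly two antipodal points, so $\bigcup_i L_i$ meets $S^1$ in at most a countable set. Since $S^1$ is uncountable, there exists a direction $(\alpha_0,\beta_0)\in S^1$ lying in none of the $L_i$. Then $(\alpha_0,\beta_0)\notin\bigcup_i L_i$, contradicting the assumed equality $\mathbb R^2=\bigcup_i L_i$. (Alternatively one could invoke Baire category or Lebesgue measure on $\mathbb R^2$ to rule out a countable covering by lines.) Hence our contradiction hypothesis fails, and there must be some $i\in I$ with $a=a_i$ and $b=b_i$.

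The only genuine content is the observation that $\mathbb R^2$ is not a countable union of proper linear subspaces; once that is in place, the conclusion is immediate. I do not expect a serious obstacle here, since the rest is just repackaging the hypothesis in the linear-algebraic form above.
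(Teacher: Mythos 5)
Your proof is correct and rests on the same underlying idea as the paper's: the countable index set $I$ cannot account for the uncountably many directions on the unit circle. The paper runs this directly (pigeonhole two linearly independent directions onto the same index $i$, forcing $a=a_i$, $b=b_i$), while you phrase it contrapositively as ``$\mathbb R^2$ is not a countable union of lines''; the two arguments are essentially identical.
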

\begin{proof}
Let $C:=\{(\a,\b)\in \mathbb R ^2\ ;\  \a,\b>0,\ \a^2+\b^2=1\}$. Then,
by assumption, for each $(\a,\b)\in C$ there exists
$i=i_{(\a,\b)}\in I$ for which
$$\a a+\b b =\a a_i+\b b_i$$
Since $I$ is countable and $C$ is uncountable there exists two
distinct elements $(\a_1,\b_1),(\a_2,\b_2)\in C$ with
$i_{(\a_1,\b_1)}=i_{(\a_2,\b_2)}$, which for simplicity we denote it
by $i$ itself. Hence
\begin{eqnarray}\label{equation}
\begin{array}{l}
\left\{
\begin{array}{ll}
\a_1 a+\b_1 b=\a_1 a_i+\b_1 b_i\\[.3cm]
\a_2 a+\b_2 b=\a_2 a_i+\b_2 b_i\\
\end{array}
\right.
\end{array}
\end{eqnarray}

Note that any two distinct elements of $C$ are linearly
independent. Hence (\ref{equation}) implies that $a=a_i$ and
$b=b_i$.
\end{proof}
\\

As the following lemma shows, if $T:\lpi\r\lpi$ is a linear majorization preserver then, roughly speaking, each row of $T$ contains, at most, one non-zero element. In what follows, for $f,g\in\lpi$, we use  the notation $f\sim g$ whenever each of $f$ and $g$ is majorized by the other i.e. $f\prec g$ and $g\prec f$.
\begin{lem}\label{atmost}
Let $I$ be any infinite set, $p\in(1,\infty)$, and $T \in \Mpr$.
Then for any $i\in I$, there is at most one $j\in I$ such that
$\langle Te_j,e_i\rangle \neq 0$.
\end{lem}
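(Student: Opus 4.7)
The plan is a proof by contradiction. Suppose that for some $i \in I$ there exist distinct $j_1, j_2 \in I$ with $a := \langle Te_{j_1}, e_i\rangle$ and $b := \langle Te_{j_2}, e_i\rangle$ both non-zero. The goal is to use the majorization-preserving property of $T$ to transfer the pair $(a,b)$ from the pair of columns $(j_1, j_2)$ at row $i$ to infinitely many pairs of columns $(j_1, k)$ at a single row $\ell^*$, and then to contradict the $\ell^q$-summability of $T^*e_{\ell^*}$, which will use $p>1$ in an essential way.

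Fix any $k \in I \setminus \{j_1, j_2\}$ and arbitrary $\alpha, \beta \in \mathbb{R}$. Put $f := \alpha e_{j_1} + \beta e_{j_2}$ and $g_k := \alpha e_{j_1} + \beta e_k$. The transposition swapping $j_2$ and $k$ gives a permutation $P \in \Plpi$ with $Pf = g_k$, so $f \sim g_k$; since $T \in \Mpr$, also $Tf \sim Tg_k$, and Theorem \ref{thm3.4} yields a permutation $\widetilde{P}$ with $Tf = \widetilde{P}(Tg_k)$. In particular the value $(Tf)(i) = \alpha a + \beta b$ equals $(Tg_k)(\ell)$ for some $\ell \in I$, i.e.
\[
\alpha a + \beta b \;=\; \alpha\,\langle Te_{j_1}, e_\ell\rangle + \beta\,\langle Te_k, e_\ell\rangle .
\]
Because $Te_{j_1}, Te_k \in \lpi$, any index $\ell$ that arises for a pair $(\alpha,\beta)$ with $\alpha a + \beta b \neq 0$ must lie in the countable set $\mathrm{supp}(Te_{j_1}) \cup \mathrm{supp}(Te_k)$; adjoining one extra symbolic zero index to absorb the case $\alpha a + \beta b = 0$ gives a countable indexed family to which Lemma \ref{alpha} applies. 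It produces an index $\ell(k) \in I$ satisfying $\langle Te_{j_1}, e_{\ell(k)}\rangle = a$ and $\langle Te_k, e_{\ell(k)}\rangle = b$.

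Next, the set $L := \{\ell \in I : \langle Te_{j_1}, e_\ell\rangle = a\}$ is \emph{finite}, because $|L|\,|a|^p \leq \|Te_{j_1}\|_p^p < \infty$. Each $\ell(k)$ lies in $L$, while $k$ ranges over the infinite set $I \setminus \{j_1, j_2\}$, so by infinite pigeonhole there is a fixed $\ell^* \in L$ and an infinite subset $K \subseteq I$ with $\langle Te_k, e_{\ell^*}\rangle = b$ for every $k \in K$. Since $p \in (1,\infty)$, the conjugate $q$ is finite and the adjoint $T^* \colon \ell^q(I) \to \ell^q(I)$ is bounded; because $(T^* e_{\ell^*})(k) = \langle Te_k, e_{\ell^*}\rangle$, the vector $T^* e_{\ell^*}$ would have infinitely many coordinates equal to $b \neq 0$, contradicting $T^* e_{\ell^*} \in \ell^q(I)$. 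The main obstacle is to legitimize the use of Lemma \ref{alpha}, which requires a countable index set, even though $I$ itself may be uncountable; this is handled by working inside the countable supports of $Te_{j_1}$ and $Te_k$.
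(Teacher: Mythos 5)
Your proof is correct and follows essentially the same route as the paper: the same use of $\alpha e_{j_1}+\beta e_{j_2}\sim\alpha e_{j_1}+\beta e_k$ together with Theorem \ref{thm3.4} and Lemma \ref{alpha}, the same finiteness of the level set of $Te_{j_1}$ at value $a$, and the same pigeonhole step. Your final contradiction via $T^*e_{\ell^*}\in\ell^q(I)$ with $q<\infty$ is just a rephrasing of the paper's appeal to the weak convergence $e_{j_n}\rightharpoonup 0$, and your explicit reduction to the countable supports is a welcome clarification of a point the paper glosses over.
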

\begin{proof}
Suppose, on the contrary, there exists $i_1\in I$ and two different
elements $j_1,j_2\in I$ for which
$$\langle Te_{j_1},e_{i_1}\rangle\neq 0~,~\langle Te_{j_2},e_{i_1}\rangle\neq 0$$
For simplicity we denote $\langle Te_{j_1},e_{i_1}\rangle,\langle
Te_{j_2},e_{i_1}\rangle$, respectively, by $a,b$. Let $F$ be given
by
\begin{eqnarray*}
F=\{i\in I; \langle Te_{j_1},e_i\rangle=a\}.
\end{eqnarray*}
Then $F$ is non-empty, and the inequality
\begin{eqnarray*}
\begin{array}{ll}
\v\sum_{i\in F}|a|^p  &= \v\sum_{i\in F} |\langle
Te_{j_1},e_i\rangle|^p\\
                      &\leq \v\sum_{i\in I} |\langle
Te_{j_1},e_i\rangle|^p\\
                      &= \|Te_{j_1}\|^p
                      <\infty\\
\end{array}
\end{eqnarray*}
shows that $F$ is finite. On the other hand, for any given
$j\neq j_1$ and $\alpha,\beta\in \mathbb R$, since $\alpha e_{j_1}+\beta e_{j_2}\sim \alpha e_{j_1}+\beta e_j$, we have
$$\forall \alpha
,\beta \in \mathbb R
~~~ \alpha Te_{j_1}+\beta Te_{j_2}
 \sim \alpha Te_{j_1}+\beta Te_j$$
 Hence, by Theorem \ref{thm3.4},
$$\alpha a+\beta b \in \{\alpha\langle Te_{j_1},e_i\rangle+\beta\langle Te_j,e_i\rangle\ ;\ i\in I\}$$
for  $j\neq j_1$ and all $\alpha,\beta\in \mathbb R$.
 But the indexed set $I$ can be
replaced by a countable one. Hence, by Lemma $\ref{alpha}$, for each $j\in I\setminus \{
j_1\}$ there exists $i\in I$ such that
$$ \langle Te_{j_1},e_i\rangle=a,~\langle Te_j,e_i\rangle=b.$$
Thus $i\in F$. Since $I$ is infinite and $F$ is finite,  there
exists $i_0\in F$ and a sequence $(j_n)$ in $I$, with $j_m\neq
j_n$ for $m\neq n$, such that
$$\langle e_{j_n},T^*e_{i_0}\rangle=\langle Te_{j_n},e_{i_0}\rangle=b\neq0$$
for all $n\in \mathbb N$. This contradicts the fact that $e_{j_n}$
converges to $0$ in the weak topology of $\lpi$.
\end{proof}\\

Using the previous lemma, the next example shows that the sum of two majorization preservers need not be a preserver.
\begin{ex}\label{ex atmost}{\rm
Let $\s_1,\s_2 :\mathbb N \r \mathbb N$ be defined by
$\s_1(n)=2n,~\s_2(n)=n$, for each $n\in \mathbb N$. Then by Example \ref{ex1}, the maps $P_{\s_1}$ and $P_{\s_2}$ are both majorization preservers. Now suppose
$T:=P_{\s_1}+P_{\s_2}$.
Then, since
$$\langle Te_1,e_2\rangle=\langle Te_2,e_2\rangle=1,$$
by Lemma \ref{atmost}, $T$ is no longer a majorization preserver.
\rm}
\end{ex}

We now have the main result of this paper.
\begin{thm}\label{thm4.9}
Suppose I is  an infinite set  and $p\in (1,+\infty)$. For a
bounded linear operator $T$ on $\ell^p(I)$ the following
conditions are equivalent.
 \begin{itemize}
 \item[{\rm(i)}] $T$ is a majorization preserver.
 \item[{\rm(ii)}] For $f,g\in \ell^p(I)$, if $f \sim g$ then $Tf\sim
 Tg$. Furthermore, for any $i\in I$ there is at most one $j\in I$
for which $\langle Te_j,e_i\rangle \neq 0$.
\item[{\rm(iii)}] For any $j_1,j_2\in I$, $Te_{j_1} \sim Te_{j_2}$
 and for each $i\in I$ there is at most one $j\in I$
with $\langle Te_j,e_i\rangle \neq 0$.
\item[{\rm(iv)}] $T= \sum_{i\in
I_0}\alpha_i P_{\sigma_i}$, where $I_0$ a countable
subset of $I$, $(\alpha_i)_{i\in I_0}$ is an element of $\ell^p(I_0)$,
and $\{\s_i:I\r I\ ;\ i\in I_0\}$ is a family of one-to-one maps
 such that for all  $i_1,i_2 \in I_0$ with $i_1\neq i_2$,
 $\sigma_{i_1}(I)\cap \sigma_{i_2}(I)=\emptyset$.
  \end{itemize}
\end{thm}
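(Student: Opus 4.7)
I intend to establish the cycle (i)$\Rightarrow$(ii)$\Rightarrow$(iii)$\Rightarrow$(iv)$\Rightarrow$(i); the last implication is Theorem \ref{thm4.5}. For (i)$\Rightarrow$(ii), applying the preservation hypothesis to both $f\prec g$ and $g\prec f$ yields $Tf\sim Tg$, while the row condition is exactly Lemma \ref{atmost}. For (ii)$\Rightarrow$(iii), the transposition of $j_1$ and $j_2$ is a permutation in $\Plpi$ interchanging $e_{j_1}$ and $e_{j_2}$, so $e_{j_1}\sim e_{j_2}$ and hence $Te_{j_1}\sim Te_{j_2}$ by (ii); the row condition is retained verbatim.

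The substantive step is (iii)$\Rightarrow$(iv). I would fix some $j_0\in I$, set $I_0:=\{i\in I\ ;\ \langle Te_{j_0},e_i\rangle\neq 0\}$ (a countable set because $Te_{j_0}\in\lpi$), and put $\alpha_i:=\langle Te_{j_0},e_i\rangle$ for $i\in I_0$, so that $(\alpha_i)_{i\in I_0}\in \ell^p(I_0)$ with $\sum_{i\in I_0}|\alpha_i|^p=\|Te_{j_0}\|_p^p$. By Theorem \ref{thm3.4} and the hypothesis $Te_j\sim Te_{j_0}$, for every $j\in I$ there is a bijection $\theta_j:I\to I$ with $Te_j=\sum_{i\in I_0}\alpha_i e_{\theta_j(i)}$; in particular the support $S_j:=\{i\in I\ ;\ \langle Te_j,e_i\rangle\neq 0\}$ equals $\theta_j(I_0)$, and the multiset of non-zero values of $Te_j$ matches that of $Te_{j_0}$.

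To assemble the $\sigma_i$'s, I would partition $I_0$ by value: $I_0=\bigsqcup_\lambda A_\lambda$ with $A_\lambda:=\{i\in I_0\ ;\ \alpha_i=\lambda\}$, each $A_\lambda$ finite. For every $j$, the bijection $\theta_j$ induces finite subsets $B_{\lambda,j}\subseteq S_j$ on which $Te_j$ takes the value $\lambda$, with $|B_{\lambda,j}|=|A_\lambda|$ and $B_{\lambda,j_0}=A_\lambda$. Choose any bijections $\phi_{\lambda,j}:A_\lambda\to B_{\lambda,j}$ with $\phi_{\lambda,j_0}=\mathrm{id}$, and define $\sigma_i:I\to I$ by $\sigma_i(j):=\phi_{\lambda,j}(i)$ when $i\in A_\lambda$. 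The row condition from (iii) says the supports $S_j$ are pairwise disjoint, which makes each $\sigma_i$ one-to-one (an equality $\sigma_i(j_1)=\sigma_i(j_2)$ would place one point in both $S_{j_1}$ and $S_{j_2}$) and forces $\sigma_{i_1}(I)\cap\sigma_{i_2}(I)=\emptyset$ when $i_1\neq i_2$: any coincidence $\sigma_{i_1}(j_1)=\sigma_{i_2}(j_2)=i^*$ first collapses the $j$'s, then the $\lambda$'s via the unique value at $i^*$, and finally the $i$'s by bijectivity of $\phi_{\lambda,j}$. Evaluating both sides on each $e_j$ yields $Te_j=\sum_{i\in I_0}\alpha_i e_{\sigma_i(j)}=\bigl(\sum_{i\in I_0}\alpha_i P_{\sigma_i}\bigr)e_j$; since the right-hand side is a bounded operator by Theorem \ref{thm4.5} and the two bounded operators agree on the total set $\{e_j\}_{j\in I}$, they agree on all of $\lpi$.

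The principal obstacle is handling value multiplicities: when a value $\lambda$ appears more than once in $Te_{j_0}$, the permutation $\theta_j$ intertwining $Te_{j_0}$ with $Te_j$ is not canonical, and the bijections $\phi_{\lambda,j}$ must be chosen so that the resulting family $\{\sigma_i\}_{i\in I_0}$ has pairwise disjoint ranges. The disjointness of the supports $S_j$ supplied by (iii) is precisely the hypothesis that makes any such choice succeed; without it (as Example \ref{ex atmost} indicates) even a sum of two genuine preservers can fail to be a preserver.
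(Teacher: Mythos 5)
Your proposal is correct and follows essentially the same route as the paper: the same cycle of implications, Lemma \ref{atmost} for the row condition, Theorem \ref{thm3.4} to produce the intertwining permutations $\theta_j$, and the same data $I_0=\mathrm{supp}(Te_{j_0})$, $\alpha_i=\langle Te_{j_0},e_i\rangle$, $\sigma_i(j)\in\mathrm{supp}(Te_j)$. The only (harmless) deviations are that your level-set bijections $\phi_{\lambda,j}$ are an unnecessary detour --- the paper's choice $\sigma_i(j):=\theta_j(i)$ already works because equation (\ref{tej}) shows $\theta_j$ is value-preserving on $I_0$, so no canonical matching is needed --- and that you conclude $T=\sum_{i\in I_0}\alpha_iP_{\sigma_i}$ by agreement of bounded operators on $\{e_j\}$ where the paper gives an explicit norm estimate for the convergence of the partial sums.
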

\begin{proof}
Suppose $T$ is a non-zero bounded linear operator on $\lpi$.\\
(i)$\Rightarrow$ (ii) is obtained from Definition \ref{defn4.1} and Lemma \ref{atmost}.\\
(ii)$\Rightarrow$ (iii) is clear.\\
(iii)$\Rightarrow$ (iv). For $j\in I$ let  $I(j):=\{i\in I;\tij\neq
0\}$. According to (iii) for $j_1\neq j_2$,
\begin{equation}\label{ij intersect}
 I(j_1)\bigcap I(j_2) =\emptyset.
\end{equation}

 On the other hand, $T\neq 0$. So there exists $j_0\in I$
such that $Te_{j_0}\neq 0$. Hence $I(j_0)\neq \emptyset$.\\
Now for $j\in I$  with $j\neq j_0$, ~$Te_j\sim Te_{j_0}$. Let
$P_j:\lpi\r\lpi$ be the permutation given by Theorem \ref{thm3.4},
so that $Te_j=P_jTe_{j_0}$. Also let $\theta_j:I\r I$ be the
bijection corresponds to $P_j$ which is uniquely determined by
$P_j(e_i)=e_{_{\!\theta_j(i)}}$,
 for all $i\in I$.\\
Let $I_0:=I(j_0)$ which is obviously a countable subset of $I$,
$\sigma_i:I\r I$ be defined by $\sigma_i(j)=\theta_j(i)$, and
$\alpha_i:=\langle Te_{j_0}, e_i\rangle$, for $i\in I_0$.\\
Note that for $i,j\in I$
\begin{eqnarray*}\label{prod}
 \langle Te_j,e_{_{\!\theta_j(i)}}\rangle &=&  \langle
 Te_j,P_j(e_i)\rangle\\
                                     &=& \langle
                                     P_j^*Te_j,e_i\rangle\\
                                     &=& \langle
                                     P_j^{-1}Te_j,e_i\rangle.\\
\end{eqnarray*}
Since $P^{-1}Te_j=Te_{j_0}$, we have
\begin{equation}\label{tej}
\langle Te_j,e_{_{\!\theta_j(i)}}\rangle = \langle
Te_{j_0},e_i\rangle.
\end{equation}
for every $i,j\in I$.
 This shows that for each $i\in I_0=I(j_0)$,
$~\theta_j(i)\in I(j)$.
Hence for $i\in I_0$ and $j_1\neq j_2$, since
$\sigma_i(j_1)=\theta_{j_1}(i)\in I(j_1),$ and
$\sigma_i(j_2)=\theta_{j_2}(i)\in I(j_2)$, (\ref{ij intersect})
shows that $\sigma_i(j_1) \neq \sigma_i(j_2)$, i.e. $\sigma_i:I\r
I$ is
one-to-one.

Let $i_1,i_2$ are two distinct elements of $I_0$. We will show
that $\sigma_{i_1}$ and $\sigma_{i_2}$ have disjoint
ranges. Suppose, on the contrary, there exist $j_1,j_2 \in I$ for
which $\sigma_{i_1}(j_1)=\sigma_{i_2}(j_2)$ which implies that
\begin{eqnarray}\label{theta ij}
 \theta_{j_1}(i_1)=\theta_{j_2}(i_2).
\end{eqnarray}
By  $(\ref{prod})$, we have
\begin{eqnarray}\label{neq0}
\langle Te_{j_1}, e_{_{\!\theta_{j_1}(i_1)}} \rangle     = \langle
Te_{j_0}, e_{i_1} \rangle   \neq   0,
\end{eqnarray}
and
\begin{eqnarray}\label{neq1}
\langle Te_{j_2}, e_{_{\!\theta_{j_2}(i_2)}} \rangle   =
\langle Te_{j_0}, e_{i_2} \rangle   \neq   0.
\end{eqnarray}
By (\ref{theta ij}),
$e_{_{\!\t_{j_1}(i_1)}}=e_{_{\!\t_{j_2}(i_2)}}$. Hence
(\ref{neq0}), (\ref{neq1}) and the assumption of (iii) implies
that $j_1=j_2$, which, again by  (\ref{theta ij}), leads to the contradiction $i_1=i_2$.

Finally, we show that $\sum_{i\in I_0}\a_iP_{\s_i}$ converges
(unconditionally) in norm to $T$.  First we consider the case where
$I_0$ is infinite. For simplicity, suppose $I_0=\mathbb N$.
 We will show that
$\sum_{n=1}^{\infty}\a_nP_{\s_n}$ converges to $T$ in the norm
topology of $\mathcal{B}\Big(\lpi\Big)$. For $j\in I$, we have
\begin{eqnarray*}
Te_j             =       P_j(Te_{j_0})   &=&    P_j(\v\sum_{i\in I_0} \langle Te_{j_0},e_i\rangle e_i)\\
   &=&    \v\sum_{i\in I_0} \langle Te_{j_0},e_i\rangle P_j e_i\\
   &=&    \v\sum_{i\in I_0} \a_i e_{_{\!\s_i(j)}}\\
   &=& \v\sum_{n=1}^{\infty}\a_n e_{_{\!\s_n(j)}}
\end{eqnarray*}

Hence for  $f=\S_{j\in I}f_j e_j \in \lpi$, and $n\in
\mathbb N$,
\begin{eqnarray*}
\| T f-\v\S_{k=1}^n\a_k P_{\s_k}(f) \|^p  &=&  \|\S_{j\in I} f_j
Te_j-\S_{k=1}^n\S_{j\in I} \a_k f_j ~e_{_{\!\s_k(j)}} \|^p\\
&=& \|\S_{j\in I}\S_{k\in \mathbb N} \a_k f_j ~e_{_{\!\s_k(j)}} - \S
_{k=1}^n\S_{j\in I}
\a_k f_j ~e_{_{\!\s_k(j)}}  \|^p\\
&=&  \| \v \S _{k>n , j\in I}\a_k f_j ~e_{_{\!\s_k(j)}} \|^p \\
&=&   \v \S _{k>n , j\in I}|\a_k f_j|^p   = \|f\|^p \S_{k=n+1}^{\infty} |\a_k|^p\\
\end{eqnarray*}
Hence $\| T-\sum_{k=1}^n\a_k P_{\s_k} \| \leq \big(\sum_{k=n+1}^{\infty}
|\a_k|^p\big)^{\frac{1}{p}}\to 0$, as $n\to\infty$.\\
 (iv)$\Rightarrow$(i). This is
Theorem \ref{thm4.5}.
\end{proof}
\\

By Theorem \ref{thm4.5}, if $T:\ell^1(I)\r\ell^1(I)$ is in the form described in part (iv) of the previous theorem, then $T$ is a majorization preserver. However it should be noted that, as the following example shows, not every majorization preserver $T:\ell^1(I)\r\ell^1(I)$ is necessarily in this form.

\begin{ex}{\rm
Let $h=\sum_{j\in I} h_j e_j  \in  \ell ^1(I)$ be a non-zero element and suppose
$T_h:\ell^1(I)\r\ell^1(I)$ is defined,  for each $f=\sum _{i\in
I}f_i\in \ell^1(I)$, by $T_h(f)=\big(\sum _{i\in I}f_i\big )h$. It is easily seen that $T_h$
is linear and bounded (with $\|T_h\|=\|h\|$). On the other hand, for $f,g\in
\ell^1(I)$, if  $f\prec g$ then $\sum _{i\in I}f_i=\sum _{i\in
I}g_i$. Hence  $T_h(f)=T_h(g)$, which clearly implies that $T_h(f)\prec T_h(g)$. Thus  $T_h$ is a
majorization preserver, which is not in the form described in the previous theorem. }
\end{ex}

\end{document}